\newcommand{\E}{\mathbb{E}}
\newcommand{\R}{\mathbb{R}}
\newcommand{\N}{\mathbb{N}}
\newtheorem{theo}{Theorem}[section]
\newtheorem{rem}[theo]{Remark}
\newtheorem{propo}[theo]{Proposition}
\newtheorem{lemma}[theo]{Lemma}
\newtheorem{ass}[theo]{Assumption}
\newcommand*\diff{\mathop{}\!\mathrm{d}}
\title{Averaging principle for slow-fast fractional stochastic differential equations}
\author{Charles-Edouard Br\'ehier}
              \address{Universite de Pau et des Pays de l'Adour, E2S UPPA, CNRS, LMAP,Pau, France}
              \email{charles-edouard.brehier@univ-pau.fr}
\author{Ibrahima Faye}       
             \address{Université Alioune Diop, Bambey, Sénégal}
              \email{ibou.faye@uadb.edu.sn}       
\begin{document}

\begin{abstract}
We prove the averaging principle for a class of stochastic systems. The slow component is solution to a fractional differential equation, which is coupled with a fast component considered as solution to an ergodic stochastic differential equation driven by a standard Brownian motion. We establish the convergence of the slow component when the time-scale separation vanishes to the solution of the so-called averaged equation, which is an autonomous fractional differential equation, in the mean-square sense. Moreover, when the fast component does not depend on the slow component, we provide a rate of convergence depending on the order of the fractional derivative.
\end{abstract}

\maketitle

{\small\noindent
{\bf AMS Classification.} 60H10, 34A08,  34C29.

\bigskip\noindent{\bf Keywords.} Stochastic differential equations. Fractional differential equations. Averaging principle.

\section{Introduction}
Multiscale modeling  and computation combined with stochastic analysis has quickly become a field of research that has a fundamental impact on all areas of science,  particularly physics, chemistry, finance and engineering. Many problems in these areas involve components that vary according to different time scales, for example slow and fast systems. These systems are often driven by standard or fractional Brownian motion associated with classical integer-order derivative~\cite{Pavliotis2008,Veretennikov1991, CEB2022}. However fractional derivatives are more appropriate in many cases, including nonlinear models. Fractional calculus is very useful in mechanic, chemistry, finance, biology and signal and image processing, see for instance~\cite{Kilbas2006, Samko1993}.

In this article, we consider the following class of slow-fast fractional stochastic differential equations, depending on the time-scale separation parameter $\epsilon$:
\begin{equation}\label{eq:system}
\left\lbrace
\begin{aligned}
X^\epsilon(t)&=x_0+\frac{1}{\Gamma(\alpha)}\int_{0}^{t}(t-s)^{\alpha-1}f\left(X^\epsilon(s),Y^\epsilon(s)\right) \diff s,\quad \forall~t\ge 0,\\
\diff Y^\epsilon(t)&=\frac{1}{\epsilon}b(X^\epsilon(t),Y^\epsilon(t))\diff t+\frac{1}{\sqrt{\epsilon}}\sigma(X^\epsilon(t),Y^\epsilon(t)) \diff B(t),\quad \forall~t\ge 0,\\
Y^\epsilon(0)&=y_0.
\end{aligned}
\right.
\end{equation}
We are interested in the regime $\epsilon\to 0$, i.e. when $X^\epsilon$ and $Y^\epsilon$ are respectively slow and fast components. The slow component $X^\epsilon$ evolves following a fractional differential equation of order $\alpha\in(0,1)$. The fast component $Y^\epsilon$ is solution to a stochastic differential equation driven by a standard Brownian motion $\bigl(B(t)\bigr)_{t\ge 0}$. We refer to Section~\ref{sec:setting} for precise assumptions. Let us mention that all the coefficients are assumed to be globally Lipschitz continuous, and that the fast equation with frozen slow component is ergodic.

The objective of this article is to establish the averaging principle: when $\epsilon\to 0$, the slow component $X^{\epsilon}$ converges to $\overline{X}$ in the mean square sense, where $\overline{X}$ is the solution of the averaged equation
\begin{equation}
\overline{X}(t)=x_0+\frac{1}{\Gamma(\alpha)}\int_{0}^{t}(t-s)^{\alpha-1}\overline{f}(\overline{X}(s))\diff s,\quad \forall~t\ge 0.
\end{equation}
where the averaged coefficient $\overline{f}$ is defined by~\eqref{eq:fbar}.

The averaging principle for stochastic differential equations has first been studied by Khasminskii in the seminal article~\cite{Khasminskii1968}. Since then, there have been many contributions on this topic, motivated by theory and applications. We do not intend to review the whole literature, the list of references below is not exhaustive. Many authors have weakened the conditions on the coefficients~\cite{Liu2020} or have considered variants of the evolution equations~\cite{CEB2022, Hairer2020}. In the literature, there are many contributions dealing with the behavior of multiscale systems driven either by standard brownian motion, fractional brownian motion, or both: see for instance~\cite{Freidlin1984,Veretennikov1991,Liu2020,Hairer2020, CEB2022}. However, to the best of our knowledge, there are no results for slow-fast systems where the slow component is solution to a fractional differential equation and the fast component is solution to a stochastic differential equation.

The first main result of this article is Theorem~\ref{theo:1}, which shows the averaging principle, in the following form: one has
\[
\underset{\epsilon\to 0}\lim~\underset{t\in[0,T]}\sup~\E[\|{X}^{\epsilon}(t)-\overline{X}(t)\|^2]=0.
\]
The proof is based on the introduction of auxiliary processes $\widehat{X}^{\epsilon,\delta}$ and $\widehat{Y}^{\epsilon,\delta}$, depending on the auxiliary parameter $\delta$, see Equation~\eqref{eq:aux} in Section~\ref{sec:aux}. The parameter $\delta$ may be interpreted as a time-step size, but the auxiliary processes cannot be simulated exactly. Proving appropriate error bounds and letting $\epsilon\to 0$ and $\delta\to 0$ provides the convergence results. However, the approach does not provide a speed of convergence with respect to $\epsilon$.

The second main result is Theorem~\ref{theo:2} and deals with the simpler situation, where the coefficients $b$ and $\sigma$, and thus the fast component $Y^\epsilon$, do not depend on the slow component. In that case, one obtains the following error estimates
\[
\underset{t\in[0,T]}\sup~\bigl(\E[\|{X}^{\epsilon}(t)-\overline{X}(t)\|^2]\bigr)^{\frac12}\le C_\alpha(T)\bigl(1+\|x_0\|+\|y_0\|\bigr)\epsilon^{\frac{\alpha}{2}},
\]
meaning that the order of convergence in the averaging principle for this kind of systems  is $\alpha/2$. It is not known whether this rate of convergence is optimal, and whether it can be obtained in the general situation considered in Theorem~\ref{theo:1}. These questions could be investigated in future works. The proof of Theorem~\ref{theo:2} is based on simpler arguments.

This article is organized as follows. Section~\ref{sec:setting} presents the main assumptions, the system and the averaging principle. The main results of this article, i.e. Theorem~\ref{theo:1} which justifies the averaging principle and Theorem~\ref{theo:2} which provides a rate of convergence in a specific situation, are stated in Section~\ref{sec:main}. Section~\ref{sec:aux} is devoted to providing some auxiliary results, such as moment bounds and regularity properties, on the solutions to the system and to the averaged equation. The proofs of further auxiliary results and then of the main results are given in Section~\ref{sec:proofs}.

\section{Setting}\label{sec:setting}

\subsection{Notation}

Let $p,q,m\in\mathbb{N}$ denote integers. Let $\mathcal{L}(\R^m,\R^q)$ denote the space of bounded linear operators from $\R^m$ to $\R^q$, which can be identified with the space $\mathcal{M}_{q,m}(\R)$ of matrices. The Euclidean norms in the spaces $\R^p$, $\R^q$ and $\mathcal{L}(\R^m,\R^q)$ are denoted by $\|\cdot\|$. The inner products in $\R^p$ and $\R^q$ are denoted by $\langle \cdot,\cdot \rangle$.

Let $\bigl(B(t)\bigr)_{t\ge 0}$ denote a standard $\R^m$-valued Brownian motion, defined on a probability space $(\Omega,\mathcal{F},\mathbb{P})$ which satisfies the usual conditions. Let $\bigl(\mathcal{F}_t\bigr)_{t\ge 0}$ denote the filtration generated by the Brownian motion.

Let $x_0\in\R^p$ and $y_0\in\R^q$, which are assumed to be deterministic (or $\mathcal{F}_0$ measurable).

Given $d\in\N$  and a Lipschitz continuous mapping $\phi:\R^d\to\R$, let
\[
{\rm Lip}(\phi)=\underset{z_1,z_2\in \R^d,z_1\neq z_2}\sup~\frac{|\phi(z_2)-\phi(z_1)|}{|z_2-z_1|}.
\]

Let $\alpha\in(0,1)$.

Without loss of generality it is assumed that the parameter $\epsilon$ takes values in $(0,1)$.

\subsection{Assumptions}

\begin{ass}\label{ass:F}
Assume that $f:\R^p\times \R^q\to \R^p$ is a globally Lipschitz continuous mapping: there exists $L_{f}\in(0,\infty)$ such that for all $x_1,x_2\in\R^p$ and all $y_1,y_2\in\R^q$ one has
\[
\|f(x_2,y_2)-f(x_1,y_1)\|\le L_{f}\bigl(\|x_2-x_1\|+\|y_2-y_1\|\bigr).
\] 
\end{ass}

\begin{ass}\label{ass:b-sigma}
Assume that $b:\R^p\times\R^q\to \R^q$ and that $\sigma:\R^p\times\R^q\to \mathcal{L}(\R^m,\R^q)$ are globally Lipschitz continuous mappings: there exist $L_{b},L_{\sigma}\in(0,\infty)$ such that for all $x_1,x_2\in\R^p$ and all $y_1,y_2\in\R^q$ one has
\begin{align*}
&\|b(x_2,y_2)-b(x_1,y_1)\|\le L_{b}\bigl(\|x_2-x_1\|+\|y_2-y_1\|\bigr)\\
&\|\sigma(x_2,y_2)-\sigma(x_1,y_1)\|\le L_{\sigma}\bigl(\|x_2-x_1\|+\|y_2-y_1\|\bigr)
\end{align*}
Moreover, the mappings $b(x,\cdot)$ and $\sigma(x,\cdot)$ satisfy the following dissipation property, uniformly with respect to $x\in\R^p$: there exists $\gamma\in(0,\infty)$ such that for all $x\in\R^p$ and all $y_1,y_2\in\R^q$ one has
\begin{equation}\label{eq:dissipation}
\langle b(x,y_2)-b(x,y_1),y_2-y_1\rangle+\frac12\|\sigma(x,y_2)-\sigma(x,y_1)\|^2\le -\gamma\|y_2-y_1\|^2.
\end{equation}
\end{ass}

Owing to the global Lipschitz continuity conditions given by Assumptions~\ref{ass:F} and~\ref{ass:b-sigma}, the mapping $f$, $b$ and $\sigma$ have at most linear growth: one has
\begin{equation}\label{eq:lineargrowth}
\underset{(x,y)\in\R^p\times\R^q}\sup~\frac{\|f(x,y)\|+\|b(x,y)\|+\|\sigma(x,y)\|}{1+\|x\|+\|y\|}<\infty.
\end{equation}
In addition, combining the dissipation property~\eqref{eq:dissipation} and the linear growth property~\eqref{eq:lineargrowth}, one obtains the following: there exists $C_{\gamma}\in(0,\infty)$ such that for all $(x,y)\in\R^p\times\R^q$ one has
\begin{equation}\label{eq:bounddissip}
\langle b(x,y),y\rangle +\frac12\|\sigma(x,y)\|^2\le -\frac{\gamma}{2}\|y\|^2+C_\gamma(1+\|x\|^2).
\end{equation}

\begin{propo}
Let $\alpha\in(0,1)$. For any $T\in(0,\infty)$ and $\epsilon\in(0,1)$, there exists a unique solution $\bigl((X^\epsilon(t),Y^\epsilon(t))\bigr)_{t\in[0,T]}$ to~\eqref{eq:system}.
\end{propo}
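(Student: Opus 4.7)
The plan is to set up a Picard iteration in the Banach space $\mathcal{E}_T$ of $\R^p\times\R^q$-valued $(\mathcal{F}_t)$-adapted continuous processes with norm $\|(X,Y)\|_{\mathcal{E}_T}^2=\sup_{t\in[0,T]}\E[\|X(t)\|^2+\|Y(t)\|^2]$. Define the map $\Phi:\mathcal{E}_T\to\mathcal{E}_T$, $\Phi(X,Y)=(\tilde X,\tilde Y)$, by
\begin{align*}
\tilde X(t)&=x_0+\frac{1}{\Gamma(\alpha)}\int_0^t(t-s)^{\alpha-1}f(X(s),Y(s))\diff s,\\
\tilde Y(t)&=y_0+\frac{1}{\epsilon}\int_0^t b(X(s),Y(s))\diff s+\frac{1}{\sqrt\epsilon}\int_0^t\sigma(X(s),Y(s))\diff B(s).
\end{align*}
Path continuity and $L^2$ bounds for $\Phi(X,Y)$ follow from the linear growth~\eqref{eq:lineargrowth}, the integrability of $(t-s)^{\alpha-1}$ on $[0,T]$, and It\^o's isometry, so $\Phi$ indeed maps $\mathcal{E}_T$ into itself.

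Next I would show that the iterates $(X^{(n)},Y^{(n)})=\Phi^n(x_0,y_0)$ form a Cauchy sequence in $\mathcal{E}_T$. Setting $u_n(t)=\E[\|X^{(n+1)}(t)-X^{(n)}(t)\|^2+\|Y^{(n+1)}(t)-Y^{(n)}(t)\|^2]$, I would estimate the $X$-component using Jensen's inequality applied to the probability measure $(t-s)^{\alpha-1}\diff s/(t^\alpha/\alpha)$ combined with Assumption~\ref{ass:F}, and the $Y$-component using Cauchy--Schwarz, It\^o's isometry, and Assumption~\ref{ass:b-sigma}. The Lebesgue integral appearing in the $Y$-estimate is then absorbed into the singular one via the crude bound $1\le T^{1-\alpha}(t-s)^{\alpha-1}$, valid for $0\le s\le t\le T$. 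This yields a single recursion
\[
u_n(t)\le K(T,\epsilon)\int_0^t(t-s)^{\alpha-1}u_{n-1}(s)\diff s.
\]

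Iterating this inequality and invoking the semigroup property of Riemann--Liouville fractional integrals gives
\[
\sup_{t\in[0,T]}u_n(t)\le \Bigl(\sup_{[0,T]}u_0\Bigr)\cdot\frac{\bigl(K(T,\epsilon)\,\Gamma(\alpha)\,T^\alpha\bigr)^n}{\Gamma(n\alpha+1)},
\]
and since $\Gamma(n\alpha+1)$ grows super-exponentially, the series $\sum_n\sup_{[0,T]}\sqrt{u_n}$ converges. Hence $(X^{(n)},Y^{(n)})$ is Cauchy in $\mathcal{E}_T$ and its limit solves~\eqref{eq:system}. Uniqueness follows by applying the same recursion to the difference of two solutions and invoking the generalized (Mittag--Leffler) Gronwall lemma. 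The main obstacle is precisely the weak singularity of the kernel: when $\alpha\le 1/2$ a direct Cauchy--Schwarz estimate fails because $(t-s)^{2\alpha-2}$ is not integrable; Jensen's inequality circumvents this, and it is the semigroup structure of the iterated fractional integrals that ultimately delivers the Mittag--Leffler summability required for convergence.
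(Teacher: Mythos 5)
The paper does not actually prove this proposition: it is stated without proof, being regarded as standard under the global Lipschitz conditions of Assumptions~\ref{ass:F} and~\ref{ass:b-sigma}, so there is no "paper proof" to match against. Your Picard-iteration argument is the natural way to fill this gap, and its substance is correct: the Jensen estimate with respect to the probability measure $(t-s)^{\alpha-1}\diff s/(t^\alpha/\alpha)$ is exactly the right device to avoid the non-integrable kernel $(t-s)^{2\alpha-2}$ when $\alpha\le 1/2$ (it is the same manipulation the paper uses, via Cauchy--Schwarz, in the proof of Proposition~\ref{propo:boundXepsYeps}); absorbing the regular Lebesgue and It\^o-isometry terms through $1\le T^{1-\alpha}(t-s)^{\alpha-1}$ to get a single recursion with kernel $(t-s)^{\alpha-1}$ is fine; and the iterated Riemann--Liouville kernel bound $\sup_{[0,T]}u_n\le(\sup_{[0,T]}u_0)\,(K\Gamma(\alpha)T^\alpha)^n/\Gamma(n\alpha+1)$ together with Stirling indeed gives summability of $\sum_n\sqrt{\sup u_n}$, hence convergence, and the same singular-kernel Gr\"onwall argument gives uniqueness. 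One technical point you should tidy up: the space of \emph{continuous} adapted processes is not complete under the norm $\sup_{t\in[0,T]}\E[\|X(t)\|^2+\|Y(t)\|^2]$, so the phrase "Cauchy in $\mathcal{E}_T$ and its limit solves~\eqref{eq:system}" needs either an upgrade of the norm to $\E\bigl[\sup_{t\in[0,T]}(\cdots)\bigr]$ (using Doob's maximal inequality for the stochastic integral and the pathwise bound for the fractional integral, with the same recursion), or an a posteriori argument that the $L^2$-limit, once inserted in the right-hand side of~\eqref{eq:system}, defines a process with continuous (for $X^\epsilon$) respectively continuously modifiable (for $Y^\epsilon$) paths. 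This is a routine repair and does not affect the validity of the approach.
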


\subsection{Averaging principle}

Given arbitrary $x\in\R^p$, consider the stochastic differential equation for the fast process
\begin{equation}\label{eq:frozenSDE}
\diff Y^{x}(t)=b(x,Y^{x}(t))\diff t+\sigma(x,Y^{x}(t))\diff B(t),\quad \forall~t\ge 0,
\end{equation}
where the slow component $x$ is frozen. Owing to the Lipschitz continuous properties on $b$ and $\sigma$ given in Assumption~\ref{ass:b-sigma}, given an arbitrary initial condition $Y^x(0)$, the equation~\eqref{eq:frozenSDE} admits a unique solution $\bigl(Y^x(t)\bigr)_{t\ge 0}$. As a consequence of the dissipation property~\eqref{eq:dissipation}, the following result is standard.

\begin{propo}\label{propo:ergofrozenSDE}
For any $x\in\R^p$, the stochastic differential equation~\eqref{eq:frozenSDE} admits a unique invariant distribution $\mu^{x}$, which satisfies
\begin{equation}\label{eq:boundmux}
\underset{x\in\R^p}\sup~\frac{\int_{\R^q} \|y\|^2 \diff \mu^x(y)}{1+\|x\|^2}<\infty. 
\end{equation}
Moreover, there exists $C\in(0,\infty)$, such that for any Lipschitz continuous mapping $\phi:\R^q\to\R$, for all $t\ge 0$ and all $x\in\R^p$, one has
\begin{equation}\label{eq:ergofrozenSDE}
\big|\E[\phi(Y^x(t))]-\int_{\R^q} \phi(y) \diff\mu^x(y)\big|\le C{\rm Lip}(\phi)e^{-\gamma t}(1+\|x\|+\E[\|Y^{x}(0)\|])
\end{equation}
and for all $x_1,x_2\in\R^p$, one has
\begin{equation}\label{eq:Lipmux}
\big|\int_{\R^q} \phi(y) \diff\mu^{x_2}(y)-\int_{\R^q} \phi(y) \diff\mu^{x_1}(y)\big|\le C{\rm Lip}(\phi)\|x_2-x_1\|.
\end{equation}
\end{propo}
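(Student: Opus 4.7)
The three claims follow from a standard synchronous coupling argument combined with the dissipation assumption \eqref{eq:dissipation}, and I would organize the proof in three stages.

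First, to obtain moment bounds uniform in time, I would apply It\^o's formula to $\|Y^x(t)\|^2$ along a solution of \eqref{eq:frozenSDE} starting from a deterministic initial condition. Using \eqref{eq:bounddissip} gives
\[
\frac{\diff}{\diff t}\E[\|Y^x(t)\|^2] \le -\gamma \E[\|Y^x(t)\|^2] + 2C_\gamma(1+\|x\|^2),
\]
so Gronwall yields $\sup_{t\ge 0}\E[\|Y^x(t)\|^2] \le C(1+\|x\|^2+\|Y^x(0)\|^2)$. This simultaneously furnishes tightness of the time-averaged occupation measures (hence existence of an invariant distribution via Krylov--Bogoliubov) and, applied to a stationary solution, produces the uniform moment bound \eqref{eq:boundmux}.

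Next, for uniqueness of $\mu^x$ and the ergodic estimate \eqref{eq:ergofrozenSDE}, I would take two solutions $Y^x, \widetilde Y^x$ of \eqref{eq:frozenSDE} with a common frozen $x$ and common Brownian motion but distinct initial conditions. It\^o's formula applied to $\|Y^x(t)-\widetilde Y^x(t)\|^2$, combined directly with \eqref{eq:dissipation} after taking expectation (so that the stochastic integral drops out), yields the $L^2$ contraction
\[
\E[\|Y^x(t)-\widetilde Y^x(t)\|^2] \le e^{-2\gamma t}\E[\|Y^x(0)-\widetilde Y^x(0)\|^2].
\]
Uniqueness of $\mu^x$ follows by comparing two stationary solutions through this contraction. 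The bound \eqref{eq:ergofrozenSDE} is then obtained by coupling with $\widetilde Y^x(0)\sim\mu^x$ chosen independently of the driving Brownian motion, using Kantorovich duality for Lipschitz test functions, and controlling $\int\|y\|\,\diff\mu^x(y)$ via \eqref{eq:boundmux}.

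Finally, for the Lipschitz dependence \eqref{eq:Lipmux}, I would couple $Y^{x_1}$ and $Y^{x_2}$ with a common initial condition and common Brownian motion, apply It\^o to $\|Y^{x_1}(t)-Y^{x_2}(t)\|^2$, and split
\[
b(x_1,Y^{x_1})-b(x_2,Y^{x_2}) = \bigl[b(x_1,Y^{x_1})-b(x_1,Y^{x_2})\bigr] + \bigl[b(x_1,Y^{x_2})-b(x_2,Y^{x_2})\bigr],
\]
with an analogous splitting of $\sigma(x_1,Y^{x_1})-\sigma(x_2,Y^{x_2})$. Using \eqref{eq:dissipation} on the inner-variable pieces, together with a Young-type inequality $(a+b)^2\le(1+\kappa)a^2+(1+\kappa^{-1})b^2$ applied to the diffusion term with $\kappa$ small enough to preserve a positive contraction rate, and a Young inequality on the drift cross-term, I would obtain
\[
\frac{\diff}{\diff t}\E[\|Y^{x_1}(t)-Y^{x_2}(t)\|^2] \le -\gamma'\E[\|Y^{x_1}(t)-Y^{x_2}(t)\|^2] + C\|x_1-x_2\|^2
\]
for some $\gamma'>0$, whence $\sup_{t\ge 0}\E[\|Y^{x_1}(t)-Y^{x_2}(t)\|^2] \le C\|x_1-x_2\|^2$. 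Passing to $t\to\infty$ and using the ergodic estimate above to identify $\E[\phi(Y^{x_j}(t))]$ with $\int\phi\,\diff\mu^{x_j}$ yields \eqref{eq:Lipmux} by Cauchy--Schwarz. I expect the main technical point to be this last stage: the diffusion cross-term must be reabsorbed into the contraction by a careful choice of $\kappa$, which only works because \eqref{eq:dissipation} retains a strictly negative rate $-\gamma$ after the diffusion contribution has been cancelled.
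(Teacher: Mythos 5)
Your argument is correct, and it is exactly the standard synchronous-coupling route that the paper itself invokes without proof (Proposition~\ref{propo:ergofrozenSDE} is stated there as a standard consequence of the dissipation condition~\eqref{eq:dissipation}, so there is no written proof to compare against): Krylov--Bogoliubov plus the uniform second-moment bound for existence and~\eqref{eq:boundmux}, the $L^2$ contraction from~\eqref{eq:dissipation} for uniqueness and~\eqref{eq:ergofrozenSDE}, and the two-parameter coupling with a small Young parameter $\kappa$ for~\eqref{eq:Lipmux}, whose only delicate point you correctly identify. One cosmetic refinement: to obtain the first moment $\E[\|Y^x(0)\|]$ on the right-hand side of~\eqref{eq:ergofrozenSDE}, apply the contraction estimate conditionally on the initial values and then integrate (giving $\E\|Y^x(t)-\widetilde Y^x(t)\|\le e^{-\gamma t}\E\|Y^x(0)-\widetilde Y^x(0)\|$), rather than an unconditional Cauchy--Schwarz step, which would require a second moment of $Y^x(0)$.
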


For any $x\in\R^p$, introduce the semigroup $\bigl(P_t^x\bigr)_{t\ge 0}$ defined by
\begin{equation}\label{eq:semigroupfrozenSDE}
P_t^x\phi(y)=\E[\phi(Y^x(t))|Y^x(0)=y],\quad\forall~y\in\R^q,
\end{equation}
if $\phi:\R^q\to\R$ is a bounded and continuous mapping.  If $\phi$ is a Lipschitz continuous mapping, the upper bound~\eqref{eq:ergofrozenSDE} can be written as
\begin{equation}\label{eq:ergosemigroupfrozenSDE}
\big|P_t^x\phi(y)-\int_{\R^q}\phi(y) \diff\mu^x(y)\big|\le C{\rm Lip}(\phi)e^{-\gamma t}(1+\|x\|+\|y\|),\quad \forall~x\in\R^p,y\in\R^q,t\ge 0.
\end{equation}

The averaged coefficient $\overline{f}:\R^p\to\R^p$ is defined by
\begin{equation}\label{eq:fbar}
\overline{f}(x)=\int_{\R^q} f(x,y) \diff \mu^x(y),\quad \forall~x\in\R^p.
\end{equation}
Note that $\overline{f}(x)$ is well-defined for all $x\in\R^p$ owing to the linear growth condition~\eqref{eq:lineargrowth} and to the bound~\eqref{eq:boundmux}. In addition, due to the Lipschitz continuity condition on $f$ (Assumption~\ref{ass:F}) and to the Lipschitz continuity property~\eqref{eq:Lipmux} of the invariant distribution $\mu^x$ with respect to $x$, the mapping $\overline{f}$ is globally Lipschitz continuous: there exists $\overline{L}_f\in(0,\infty)$ such that for all $x_1,x_2\in\R^p$ one has
\[
\|\overline{f}(x_2)-\overline{f}(x_1)\|\le \overline{L}_f\|x_2-x_1\|.
\]
As a consequence, $\overline{f}$ has at most linear growth: one has
\begin{equation}\label{eq:lineargrowth2}
\underset{x\in\R^p}\sup~\frac{\|\overline{f}(x)\|}{1+\|x\|}<\infty.
\end{equation}

The averaged equation is the fractional differential equation
\begin{equation}\label{eq:averaged}
\overline{X}(t)=x_0+\frac{1}{\Gamma(\alpha)}\int_{0}^{t}(t-s)^{\alpha-1}\overline{f}(\overline{X}(s))\diff s,\quad \forall~t\ge 0.
\end{equation}

\begin{propo}
Let $\alpha\in(0,1)$. For any $T\in(0,\infty)$, there exists a unique solution $\bigl(\overline{X}(t)\bigr)_{t\in[0,T]}$ to~\eqref{eq:averaged}.
\end{propo}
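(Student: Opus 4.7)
The plan is to apply a fixed-point argument, adapted to the weakly singular Volterra kernel $(t-s)^{\alpha-1}$. Fix $T\in(0,\infty)$ and work in the Banach space $E=C([0,T],\R^p)$ endowed with the supremum norm $\|\cdot\|_\infty$. Define the integral operator $\Phi:E\to E$ by
\[
\Phi(X)(t)=x_0+\frac{1}{\Gamma(\alpha)}\int_{0}^{t}(t-s)^{\alpha-1}\overline{f}(X(s))\diff s,\quad t\in[0,T].
\]
Since $\alpha>0$ the kernel $s\mapsto(t-s)^{\alpha-1}$ is integrable on $[0,t]$, and combined with the at most linear growth~\eqref{eq:lineargrowth2} of $\overline{f}$ together with the continuity of $X$, a standard argument shows that $\Phi(X)$ is well-defined and continuous on $[0,T]$, so that $\Phi$ indeed maps $E$ to $E$.

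The next step is to establish the key recursive Lipschitz estimate. Using the global Lipschitz property of $\overline{f}$ with constant $\overline{L}_f$, one has for all $X_1,X_2\in E$ and all $t\in[0,T]$
\[
\|\Phi(X_1)(t)-\Phi(X_2)(t)\|\le \frac{\overline{L}_f}{\Gamma(\alpha)}\int_{0}^{t}(t-s)^{\alpha-1}\|X_1(s)-X_2(s)\|\diff s.
\]
Iterating this bound $n$ times and evaluating the convolution of the fractional kernels by means of the Beta function identity $\int_0^t(t-s)^{\alpha-1}s^{n\alpha-1}\diff s=\frac{\Gamma(\alpha)\Gamma(n\alpha)}{\Gamma((n+1)\alpha)}t^{(n+1)\alpha-1}$, one obtains by induction
\[
\|\Phi^n(X_1)(t)-\Phi^n(X_2)(t)\|\le \frac{(\overline{L}_f t^\alpha)^n}{\Gamma(n\alpha+1)}\|X_1-X_2\|_\infty,\quad n\in\N.
\]

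Since $\frac{(\overline{L}_f T^\alpha)^n}{\Gamma(n\alpha+1)}\to 0$ as $n\to\infty$ (this is the generic term of a convergent Mittag-Leffler-type series), there exists an integer $n_0$ such that $\Phi^{n_0}$ is a strict contraction on $E$. By the standard extension of the Banach fixed-point theorem (a mapping whose iterate is a contraction has a unique fixed point), $\Phi$ admits a unique fixed point $\overline{X}\in E$, which is exactly the unique solution to~\eqref{eq:averaged} on $[0,T]$. Since $T$ is arbitrary, uniqueness on each $[0,T]$ propagates to a unique global solution on $[0,\infty)$.

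The main obstacle, if any, is a technical one: proving that $\Phi(X)$ belongs to $C([0,T],\R^p)$, i.e.\ that the singularity of the kernel at $s=t$ does not destroy continuity; this is handled by a dominated convergence argument together with the uniform continuity of $X$ and the integrability of the kernel for $\alpha>0$. The iterated-kernel computation is routine once the Beta function identity is invoked, and the use of $\Phi^{n_0}$ rather than $\Phi$ itself avoids any restriction on $T$ that a direct contraction argument would impose.
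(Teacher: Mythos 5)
Your argument is correct and complete. Note, however, that the paper does not prove this proposition at all: it is stated without proof, being regarded as a standard well-posedness result for fractional (Volterra-type) integral equations with a globally Lipschitz nonlinearity, the Lipschitz continuity of $\overline{f}$ having been established just before via Assumption~\ref{ass:F} and the property~\eqref{eq:Lipmux}. Your fixed-point argument with the iterated operator $\Phi^{n_0}$ is exactly the classical route (equivalent alternatives would be a Bielecki weighted norm or the singular Gr\"onwall lemma that the paper invokes elsewhere for a priori bounds), and it correctly avoids any smallness restriction on $T$. One cosmetic slip: the Beta identity you cite, $\int_0^t(t-s)^{\alpha-1}s^{n\alpha-1}\diff s=\frac{\Gamma(\alpha)\Gamma(n\alpha)}{\Gamma((n+1)\alpha)}t^{(n+1)\alpha-1}$, is true but is not the version your induction actually uses; the inductive step needs $\int_0^t(t-s)^{\alpha-1}s^{n\alpha}\diff s=\frac{\Gamma(\alpha)\Gamma(n\alpha+1)}{\Gamma((n+1)\alpha+1)}t^{(n+1)\alpha}$, which yields precisely the bound $\frac{(\overline{L}_f t^\alpha)^n}{\Gamma(n\alpha+1)}$ you state, so the conclusion (vanishing of the Mittag-Leffler general term and contraction of some iterate) is unaffected.
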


\section{Main results}\label{sec:main}

Let us now state the two main results of this article. First, Theorem~\ref{theo:1} justifies the averaging principle, i.e. the convergence of the slow component $X^\epsilon$ of~\eqref{eq:system} to the solution $\overline{X}$ to the averaged equation~\eqref{eq:averaged}, in the setting described in Section~\ref{sec:setting}.

\begin{theo}\label{theo:1}
For all $T\in(0,\infty)$, $x_0\in\R^p$ and $y_0\in\R^q$, one has
\begin{equation}
\underset{\epsilon\to 0}\lim~\underset{t\in[0,T]}\sup~\E[\|{X}^{\epsilon}(t)-\overline{X}(t)\|^2]=0.
\end{equation}
\end{theo}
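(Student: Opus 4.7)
The plan is to follow the classical Khasminskii time-discretization strategy, adapted to the fractional Volterra structure of the slow equation. I write
\begin{equation*}
X^\epsilon(t)-\overline{X}(t)=\frac{1}{\Gamma(\alpha)}\int_0^t(t-s)^{\alpha-1}\bigl(f(X^\epsilon(s),Y^\epsilon(s))-\overline{f}(\overline{X}(s))\bigr)\diff s
\end{equation*}
and split the integrand as $\bigl(f(X^\epsilon(s),Y^\epsilon(s))-\overline{f}(X^\epsilon(s))\bigr)+\bigl(\overline{f}(X^\epsilon(s))-\overline{f}(\overline{X}(s))\bigr)$. The second piece is directly handled by the Lipschitz property of $\overline{f}$ and will be absorbed at the end via a (fractional) Grönwall-type argument applied to $u(t):=\underset{0\le r\le t}\sup\,\E[\|X^\epsilon(r)-\overline{X}(r)\|^2]$, since $\int_0^t(t-s)^{\alpha-1}u(s)\diff s$ can be iterated with the kernel. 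The genuine work is to prove that the first piece (the fluctuation term) tends to $0$ in mean-square, uniformly in $t\in[0,T]$.

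To estimate the fluctuation term, I would introduce a small time step $\delta=\delta(\epsilon)$, partition $[0,T]$ into intervals $I_k=[k\delta,(k+1)\delta)$, and use the auxiliary processes $\widehat X^{\epsilon,\delta}$, $\widehat Y^{\epsilon,\delta}$ from Section~\ref{sec:aux} in which, on each $I_k$, the slow variable appearing in the drift/diffusion of the fast equation is frozen to its value at $k\delta$. Two kinds of control are then needed: first, Lipschitz/moment estimates comparing $(X^\epsilon,Y^\epsilon)$ to $(\widehat X^{\epsilon,\delta},\widehat Y^{\epsilon,\delta})$, which rely on the dissipativity~\eqref{eq:dissipation} to keep the fast component in $L^2$ uniformly in $\epsilon$ and on the $\alpha$-Hölder regularity of the slow fractional integral, and which give an error vanishing as $\delta\to 0$ uniformly in $\epsilon$; second, on each frozen block, the time rescaling $s\mapsto s/\epsilon$ transforms $\widehat Y^{\epsilon,\delta}$ into a solution of the frozen SDE~\eqref{eq:frozenSDE}, so that
\begin{equation*}
\E\Bigl[\frac{1}{\delta}\int_{k\delta}^{(k+1)\delta}f\bigl(X^\epsilon(k\delta),\widehat Y^{\epsilon,\delta}(s)\bigr)\diff s\Bigr]
\end{equation*}
can be compared to $\overline f(X^\epsilon(k\delta))$ through the exponential ergodicity~\eqref{eq:ergosemigroupfrozenSDE}. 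A Fubini computation plus the bound~\eqref{eq:ergosemigroupfrozenSDE} yields an error of order $\epsilon/\delta$ per block, with a constant controlled by the $L^2$-moments of $X^\epsilon$ and $Y^\epsilon$.

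Combining the two controls inside the fractional integral, one obtains, for some $C(T)$,
\begin{equation*}
\underset{t\in[0,T]}\sup~\E\Bigl\|\frac{1}{\Gamma(\alpha)}\int_0^t(t-s)^{\alpha-1}\bigl(f(X^\epsilon(s),Y^\epsilon(s))-\overline f(X^\epsilon(s))\bigr)\diff s\Bigr\|^2\le C(T)\bigl(r_1(\delta)+r_2(\epsilon/\delta)\bigr),
\end{equation*}
where $r_1(\delta)\to 0$ as $\delta\to 0$ and $r_2(\epsilon/\delta)\to 0$ as $\epsilon/\delta\to 0$; choosing for instance $\delta=\sqrt{\epsilon}$ makes the right-hand side vanish as $\epsilon\to 0$. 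Plugging this back into the decomposition of $X^\epsilon-\overline X$ and applying the fractional Grönwall lemma to $u(t)$ closes the argument. The main obstacle I anticipate is bookkeeping the singularity of the kernel $(t-s)^{\alpha-1}$ in each of the above steps, and in particular ensuring that the Hölder regularity available on $X^\epsilon$ (no more than order $\alpha$, from the fractional integration) is strong enough to control the block-wise replacement of $X^\epsilon(s)$ by $X^\epsilon(k\delta)$ inside $f$; this is where the precise moment and regularity bounds from Section~\ref{sec:aux} are essential.
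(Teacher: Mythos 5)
Your overall strategy coincides with the paper's: a Khasminskii time discretization with the auxiliary processes \eqref{eq:aux}, the dissipativity \eqref{eq:dissipation} together with the H\"older regularity of Proposition~\ref{propo:regulXeps} to control the freezing error (this is Lemma~\ref{lem:errorY} and Lemma~\ref{lem:errorX1}), the ergodicity of the frozen equation \eqref{eq:frozenSDE} for the block averages, and a Gr\"onwall argument adapted to the singular kernel. There is, however, a genuine gap at the central step, namely the mean-square control of the fluctuation term. What you propose to estimate on each block is $\E\bigl[\frac{1}{\delta}\int_{k\delta}^{(k+1)\delta}f\bigl(X^\epsilon(k\delta),\widehat Y^{\epsilon,\delta}(s)\bigr)\diff s\bigr]-\overline f\bigl(X^\epsilon(k\delta)\bigr)$, which is a first-moment statement; the theorem requires an $L^2$ bound on the kernel-weighted sum of the block fluctuations. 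For that one must expand the square as a double integral over ordered pairs $s_1\le s_2$, condition on $\mathcal{F}_{s_1}$ (within a block) or on $\mathcal{F}_{t_\ell}$ (across blocks), apply the Markov property together with \eqref{eq:ergosemigroupfrozenSDE}, and keep the singular weights $(t-s_1)^{\alpha-1}(t-s_2)^{\alpha-1}$ inside the integrals: this is precisely the treatment of the term $r_1^{\epsilon,\delta}$ in the paper's proof of Lemma~\ref{lem:errorX2}, leading to the error terms \eqref{eq:rho1} and \eqref{eq:rho2}. Controlling only the expectation of each block average says nothing about the within-block variance or the correlations between distinct blocks, so your displayed comparison cannot by itself deliver the claimed uniform mean-square estimate.

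Moreover, the quantitative bookkeeping you announce, an ergodic error of order $\epsilon/\delta$ per block and the coupling $\delta=\sqrt{\epsilon}$, is not justified in the presence of the kernel $(t-s)^{\alpha-1}$: blocks close to $t$ carry a non-uniform weight, and the analogous computation carried out in the proof of Theorem~\ref{theo:2} shows that the double integral against the mixing factor naturally produces an order $\epsilon^{\alpha}$, not $\epsilon$. Fortunately, no rate is needed for Theorem~\ref{theo:1}: for fixed $\delta$ the terms \eqref{eq:rho1}--\eqref{eq:rho2} tend to $0$ as $\epsilon\to 0$ by dominated convergence, and one lets $\delta\to 0$ afterwards; this is the route taken in the paper, and it is exactly why no speed of convergence in $\epsilon$ is obtained in the general case. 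Note finally that this two-step limit yields a priori a bound on $\sup_{t}\limsup_{\epsilon\to 0}$, i.e.\ a pointwise-in-$t$ convergence; the uniformity in $t$ stated in the theorem must then be recovered from the H\"older estimates of Propositions~\ref{propo:regulXeps} and~\ref{propo:regulXbar} by reducing the supremum over $[0,T]$ to the finitely many grid points, a step which your sketch asserts but does not establish.
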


Theorem~\ref{theo:1} does not provide a rate of convergence with respect to $\epsilon$ in the averaging principle. Theorem~\ref{theo:2} provides a result in that direction, when it is assumed that the fast component $Y^\epsilon$ does not depend on the slow component $X^\epsilon$, i.e. that the mapping $b$ and $\sigma$ satisfy $b(x,y)=b(0,y)$ and $\sigma(x,y)=\sigma(0,y)$. One obtains the rate of convergence $\alpha/2$.

\begin{theo}\label{theo:2}
Assume that the mappings $b$ and $\sigma$ are independent of the slow component, i.e. that $b(x,y)=b(0,y)$ and $\sigma(x,y)=\sigma(0,y)$ for all $x\in\R^p$ and $y\in\R^q$.

For all $T\in(0,\infty)$, there exists $C_\alpha(T)\in(0,\infty)$ such that for all $x_0\in\R^p$ and $y_0\in\R^q$, one has
\begin{equation}\label{eq:theo2}
\underset{t\in[0,T]}\sup~\bigl(\E[\|{X}^{\epsilon}(t)-\overline{X}(t)\|^2]\bigr)^{\frac12}\le C_\alpha(T)\bigl(1+\|x_0\|+\|y_0\|\bigr)\epsilon^{\frac{\alpha}{2}}.
\end{equation}
\end{theo}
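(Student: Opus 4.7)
The plan is to decompose the error $\mathcal{E}(t):=X^\epsilon(t)-\overline{X}(t)$ into a Gr\"onwall-type contribution and a fluctuation contribution. Using the defining equations of $X^\epsilon$ and $\overline{X}$ I write $\mathcal{E}(t)=I_1(t)+I_2(t)$ with
\[
I_1(t)=\frac{1}{\Gamma(\alpha)}\int_0^t(t-s)^{\alpha-1}\bigl[f(X^\epsilon(s),Y^\epsilon(s))-f(\overline{X}(s),Y^\epsilon(s))\bigr]\diff s,
\]
\[
I_2(t)=\frac{1}{\Gamma(\alpha)}\int_0^t(t-s)^{\alpha-1}\bigl[f(\overline{X}(s),Y^\epsilon(s))-\overline{f}(\overline{X}(s))\bigr]\diff s.
\]
For $I_1$, the global Lipschitz continuity of $f$ combined with Cauchy--Schwarz against the finite measure $(t-s)^{\alpha-1}\diff s$ on $[0,t]$ immediately gives $\E[\|I_1(t)\|^2]\le C_T\int_0^t(t-s)^{\alpha-1}\E[\|\mathcal{E}(s)\|^2]\diff s$.

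The heart of the proof is the bound $\E[\|I_2(t)\|^2]\le C_\alpha(T)(1+\|x_0\|^2+\|y_0\|^2)\epsilon^\alpha$. Because $b$ and $\sigma$ do not depend on $x$, the invariant measure $\mu=\mu^x$ is independent of $x$; hence $g(x,y):=f(x,y)-\overline{f}(x)$ satisfies $\int g(x,y)\diff\mu(y)=0$, and $\overline{X}$ is deterministic. Expanding the square of $I_2(t)$ as a double integral over $\{0\le r\le s\le t\}$ and conditioning on $\mathcal{F}_r$, the Markov property identifies $\E[g(\overline{X}(s),Y^\epsilon(s))\mid\mathcal{F}_r]$ with $(P_{(s-r)/\epsilon}g(\overline{X}(s),\cdot))(Y^\epsilon(r))$, where $P_t$ is the semigroup of the frozen SDE~\eqref{eq:frozenSDE} (which no longer depends on $x$; the factor $1/\epsilon$ in the fast equation only accelerates time). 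The ergodic estimate~\eqref{eq:ergosemigroupfrozenSDE}, applied with $\mathrm{Lip}(g(\overline{X}(s),\cdot))\le L_f$, together with uniform $L^2$ moment bounds on $Y^\epsilon$ (obtained from the dissipation~\eqref{eq:bounddissip}) and on $\overline{X}$ (from the linear growth~\eqref{eq:lineargrowth2}), then yields
\[
\bigl|\E\langle g(\overline{X}(s),Y^\epsilon(s)),g(\overline{X}(r),Y^\epsilon(r))\rangle\bigr|\le C_T(1+\|x_0\|^2+\|y_0\|^2)e^{-\gamma(s-r)/\epsilon}.
\]

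To absorb this exponential decay against the product of the two singular kernels, I would use the monotonicity $(t-r)^{\alpha-1}\le(s-r)^{\alpha-1}$, valid for $r<s<t$ because $\alpha-1<0$. After this substitution, the change of variables $v=\gamma(s-r)/\epsilon$ gives
\[
\int_0^s(s-r)^{\alpha-1}e^{-\gamma(s-r)/\epsilon}\diff r\le (\epsilon/\gamma)^\alpha\Gamma(\alpha),
\]
and an additional integration against $(t-s)^{\alpha-1}\diff s$ produces $\E[\|I_2(t)\|^2]\le C_\alpha(T)(1+\|x_0\|^2+\|y_0\|^2)\epsilon^\alpha$. Combining with the $I_1$ bound and applying a singular Gr\"onwall inequality (Henry's inequality) to
\[
\E[\|\mathcal{E}(t)\|^2]\le C_T\int_0^t(t-s)^{\alpha-1}\E[\|\mathcal{E}(s)\|^2]\diff s+C_\alpha(T)(1+\|x_0\|^2+\|y_0\|^2)\epsilon^\alpha
\]
closes the argument, and taking the square root yields~\eqref{eq:theo2}. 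The main obstacle is exactly this kernel manipulation: the naive bound $(t-r)^{\alpha-1}\le(t-s)^{\alpha-1}$ would leave a non-integrable $(t-s)^{2(\alpha-1)}$ singularity when $\alpha\le 1/2$, so one must transfer one of the singularities onto $(s-r)^{\alpha-1}$ in order to pair it with the exponential of width $O(\epsilon)$ and extract the $\epsilon^\alpha$ factor.
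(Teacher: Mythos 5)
Your proposal is correct and follows essentially the same route as the paper: the same splitting into a Lipschitz term handled by Cauchy--Schwarz and a fluctuation term expanded as a double integral, the same conditioning/Markov/ergodicity argument using that $\Delta f(x,\cdot)$ is centered for the $x$-independent invariant measure, the same transfer of the earlier-time singular kernel onto $(s-r)^{\alpha-1}$ to pair it with $e^{-\gamma(s-r)/\epsilon}$ and extract $\epsilon^{\alpha}$ (the paper does this via the change of variables $\theta=t-s_2$, $z=s_2-s_1$ and the bound $(\theta+z)^{\alpha-1}\le z^{\alpha-1}$, which is your monotonicity estimate), and finally the singular Gr\"onwall inequality. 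No gaps to report.
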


\begin{rem}
For any $\epsilon\in(0,1)$, introduce the processes $\bigl(\mathcal{X}^\epsilon(t)\bigr)_{t\ge 0}$ and $\bigl(\mathcal{Y}^\epsilon(t)\bigr)_{t\ge 0}$ defined by
\[
\mathcal{X}^\epsilon(t)=X^\epsilon(\epsilon t),\quad \mathcal{Y}^\epsilon(t)=Y^\epsilon(\epsilon t),\qquad \forall~t\ge 0.
\]
Those processes are solutions to the system
\[
\left\lbrace
\begin{aligned}
\mathcal{X}^\epsilon(t)&=x_0+\frac{\epsilon^\alpha}{\Gamma(\alpha)}\int_{0}^{t}(t-s)^{\alpha-1}f\left(\mathcal{X}^\epsilon(s),\mathcal{Y}^\epsilon(s)\right) \diff s,\quad \forall~t\ge 0,\\
\diff \mathcal{Y}^\epsilon(t)&=b(\mathcal{X}^\epsilon(t),\mathcal{Y}^\epsilon(t))\diff t+\sigma(\mathcal{X}^\epsilon(t),\mathcal{Y}^\epsilon(t)) \diff \tilde{B}^{\epsilon}(t),\quad \forall~t\ge 0,\\
\mathcal{Y}^\epsilon(0)&=y_0,
\end{aligned}
\right.
\]
which is often considered in the literature. Note that the fast equation is driven by a standard Brownian motion $\bigl(\tilde{B}^{\epsilon}(t)\bigr)_{t\ge 0}$ which depends on $\epsilon$, defined by $\tilde{B}^\epsilon(t)=\epsilon^{-\frac12}B(\epsilon t)$.

Define also the process $\bigl(\overline{\mathcal{X}}^\epsilon(t)\bigr)_{t\ge 0}$ by
\[
\overline{\mathcal{X}}^\epsilon(t)=\overline{X}(\epsilon t),\quad \forall~t\ge 0
\]
which is the solution to
\[
\overline{\mathcal{X}}^\epsilon(t)=x_0+\frac{\epsilon^\alpha}{\Gamma(\alpha)}\int_{0}^{t}(t-s)^{\alpha-1}\overline{f}(\overline{\mathcal{X}}^\epsilon(s))\diff s,\quad \forall~t\ge 0.
\]
The results of Theorems~\ref{theo:1} and~\ref{theo:2} can be written as
\[
\underset{\epsilon\to 0}\lim~\underset{t\in[0,\frac{T}{\epsilon}]}\sup~\E[\|\mathcal{X}^{\epsilon}(t)-\overline{\mathcal{X}}(t)\|^2]=0
\]
and
\[
\underset{t\in[0,\frac{T}{\epsilon}]}\sup~\bigl(\E[\|\mathcal{X}^{\epsilon}(t)-\overline{\mathcal{X}}(t)\|^2]\bigr)^{\frac12}\le C_\alpha(T)\bigl(1+\|x_0\|+\|y_0\|\bigr)\epsilon^{\frac{\alpha}{2}}
\]
respectively. It is necessary to consider times of the order $T/\epsilon$ in order to observe the averaging effect.
\end{rem}

\section{Auxiliary results}\label{sec:aux}

In the proofs below, the value of $C\in(0,\infty)$ (or $C(T)$ or $C_\alpha(T)$) may vary from line to line.

\subsection{Moment bounds}

\begin{propo}\label{propo:boundXepsYeps}
For any time $T\in(0,\infty)$, there exists $C(T)\in(0,\infty)$ such that for any initial values $x_0\in\R^p$ and $y_0\in\R^q$ one has
\begin{equation}\label{eq:boundXepsYeps}
\underset{\epsilon\in(0,1)}\sup~\underset{t\in[0,T]}\sup~\E\bigl[\|X^\epsilon(t)\|^2+\|Y^\epsilon(t)\|^2\bigr]\le C(T)\bigl(1+\|x_0\|^2+\|y_0\|^2\bigr).
\end{equation}
\end{propo}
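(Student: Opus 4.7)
The plan is to treat the two components in turn and then close the loop through a fractional Gronwall argument.

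First, for the fast component $Y^\epsilon$, I would apply Itô's formula to $\|Y^\epsilon(t)\|^2$:
\[
d\|Y^\epsilon(t)\|^2 = \frac{2}{\epsilon}\langle Y^\epsilon(t),b(X^\epsilon(t),Y^\epsilon(t))\rangle dt + \frac{1}{\epsilon}\|\sigma(X^\epsilon(t),Y^\epsilon(t))\|^2 dt + dM(t),
\]
where $M$ is a local martingale (handled in the standard way by a stopping time argument, which I will not spell out). Taking expectations and invoking the dissipation estimate \eqref{eq:bounddissip} yields
\[
\frac{d}{dt}\E[\|Y^\epsilon(t)\|^2] \le -\frac{\gamma}{\epsilon}\E[\|Y^\epsilon(t)\|^2] + \frac{2C_\gamma}{\epsilon}\bigl(1+\E[\|X^\epsilon(t)\|^2]\bigr).
\]
Integrating this linear Gronwall-type inequality gives, with $U^\epsilon(t)=\sup_{s\in[0,t]}\E[\|X^\epsilon(s)\|^2]$,
\[
\E[\|Y^\epsilon(t)\|^2] \le \|y_0\|^2 + \tfrac{2C_\gamma}{\gamma}\bigl(1+U^\epsilon(t)\bigr),
\]
with constants independent of $\epsilon$, which is the key point since $\gamma/\epsilon$ appears both in the exponential decay rate and in the source term.

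Second, for the slow component $X^\epsilon$, starting from \eqref{eq:system} and using $(a+b)^2\le 2a^2+2b^2$, I apply Jensen's inequality with respect to the probability measure $\alpha t^{-\alpha}(t-s)^{\alpha-1}\mathbf{1}_{[0,t]}(s)ds$ to handle the singular kernel:
\[
\Bigl(\int_0^t(t-s)^{\alpha-1}\|f(X^\epsilon(s),Y^\epsilon(s))\| ds\Bigr)^2 \le \frac{t^\alpha}{\alpha}\int_0^t(t-s)^{\alpha-1}\|f(X^\epsilon(s),Y^\epsilon(s))\|^2 ds.
\]
Using the linear growth \eqref{eq:lineargrowth} of $f$ and taking expectations leads, for $t\in[0,T]$, to
\[
\E[\|X^\epsilon(t)\|^2] \le C(T)\bigl(1+\|x_0\|^2\bigr) + C(T)\int_0^t(t-s)^{\alpha-1}\bigl(\E[\|X^\epsilon(s)\|^2]+\E[\|Y^\epsilon(s)\|^2]\bigr) ds.
\]
Substituting the bound on $\E[\|Y^\epsilon(s)\|^2]$ obtained in the previous step and taking the supremum over $t'\in[0,t]$ gives a closed inequality of the form
\[
U^\epsilon(t) \le C(T)\bigl(1+\|x_0\|^2+\|y_0\|^2\bigr) + C(T)\int_0^t(t-s)^{\alpha-1}U^\epsilon(s) ds.
\]

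Finally, I would conclude via the generalized (Henry) Gronwall inequality for weakly singular kernels, which yields $U^\epsilon(t)\le C(T)(1+\|x_0\|^2+\|y_0\|^2)E_\alpha(C(T)\Gamma(\alpha)t^\alpha)$, hence \eqref{eq:boundXepsYeps}. The one delicate point is verifying that all the constants in the fast-component bound can be made independent of $\epsilon$; this relies crucially on the strict dissipation constant $\gamma>0$ in Assumption~\ref{ass:b-sigma}, which cancels the $1/\epsilon$ factors both in the drift and in the diffusion. The remaining steps are standard localization to handle the martingale term and then applying fractional Gronwall.
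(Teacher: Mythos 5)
Your proposal is correct and rests on the same three ingredients as the paper (It\^o's formula plus the dissipation bound~\eqref{eq:bounddissip} for $Y^\epsilon$, a Cauchy--Schwarz/Jensen estimate against the singular kernel for $X^\epsilon$, and a Gr\"onwall lemma for weakly singular kernels), but you close the loop differently. The paper substitutes the full exponential-convolution bound for $\E[\|Y^\epsilon(s)\|^2]$ into the $X$-inequality and then must control the resulting double integral via Fubini and the uniform-in-$\epsilon$ estimate $\frac{1}{\epsilon}\int_r^t(t-s)^{\alpha-1}e^{-\gamma(s-r)/(2\epsilon)}\,\diff s\le C\,(t-r)^{\alpha-1}$, which is the technical heart of its argument. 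You instead crush the fast-variable bound immediately to $\E[\|Y^\epsilon(t)\|^2]\le \|y_0\|^2+\frac{2C_\gamma}{\gamma}\bigl(1+U^\epsilon(t)\bigr)$ using $\frac{1}{\epsilon}\int_0^t e^{-\gamma(t-s)/\epsilon}\diff s\le \frac{1}{\gamma}$, and then run the fractional Gr\"onwall argument on the running supremum $U^\epsilon$; this avoids the Fubini computation entirely and is arguably simpler. Two small points deserve a line each if you write this up: (i) passing from the pointwise inequality for $\E[\|X^\epsilon(t')\|^2]$ to the closed inequality for $U^\epsilon(t)$ is not a naive kernel comparison (for $t'<t$ the singular kernel $(t'-s)^{\alpha-1}$ is \emph{larger} pointwise); you need the change of variables $u=t'-s$ together with the monotonicity of $U^\epsilon$ to see that $t\mapsto\int_0^t(t-s)^{\alpha-1}U^\epsilon(s)\diff s$ is nondecreasing, and you should also note that $U^\epsilon(t)<\infty$ a priori on $[0,T]$ before invoking the Henry--Gr\"onwall lemma; (ii) the statement~\eqref{eq:boundXepsYeps} bounds both components, so you should conclude by feeding the resulting bound on $U^\epsilon(T)$ back into your first display to control $\sup_{t\in[0,T]}\E[\|Y^\epsilon(t)\|^2]$, exactly as the paper does.
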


\begin{proof}[Proof of Proposition~\ref{propo:boundXepsYeps}]
On the one hand, recall that the mapping $f$ has at most polynomial growth, see the inequality~\eqref{eq:lineargrowth}. As a result, for all $t\in[0,T]$ one has
\[
\|X^\epsilon(t)\|\le \|x_0\|+C\int_{0}^{t}(t-s)^{\alpha-1}\bigl(1+\|X^\epsilon(s)\|+\|Y^\epsilon(s)\|\bigr)\diff s.
\]
In addition, for all $\alpha\in(0,1)$ and $T\in(0,\infty)$ one has
\[
\int_{0}^{t}(t-s)^{\alpha-1}\diff s=\frac{t^\alpha}{\alpha}\le \frac{T^\alpha}{\alpha},\quad \forall~t\in[0,T].
\]
Appplying the Cauchy--Schwarz inequality, using the inequality above and taking expectation, one obtains for all $t\in[0,T]$
\begin{equation}\label{eq:ineqXeps}
\E[\|X^\epsilon(t)\|^2]\le 2\|x_0\|^2+C(T)\int_{0}^{t}(t-s)^{\alpha-1}\bigl(1+\E[\|X^\epsilon(s)\|^2]+\E[\|Y^\epsilon(s)\|^2]\bigr)\diff s.
\end{equation}
On the other hand, applying the It\^o formula and using the condition~\eqref{eq:bounddissip} satisfied by $b$ and $\sigma$, one obtains for all $t\in[0,T]$
\begin{align*}
\frac12\frac{\diff \E[\|Y^\epsilon(t)\|^2]}{\diff t}&=\frac{1}{\epsilon}\E[\langle b(X^\epsilon(t),Y^\epsilon(t)),Y^\epsilon(t)\rangle]+\frac{1}{2\epsilon}\E[\|\sigma(X^\epsilon,Y^\epsilon(t))\|^2]\\
&\le -\frac{\gamma}{2\epsilon}\E[\|Y^\epsilon(t)\|^2]+\frac{C}{\epsilon}(1+\E[\|X^\epsilon(t)\|^2).
\end{align*}
Applying the Gr\"onwall inequality then yields the following inequality: for all $t\in[0,T]$ one has
\begin{align}
\E[\|Y^\epsilon(t)\|^2]&\le e^{-\frac{\gamma t}{2\epsilon}}\|y_0\|^2+\frac{C}{\epsilon}\int_0^t e^{-\frac{\gamma (t-s)}{2\epsilon}}(1+\E[\|X^\epsilon(s)\|^2)\diff s \nonumber \\
&\le \|y_0\|^2+\frac{2C}{\gamma}+\frac{C}{\epsilon}\int_0^t e^{-\frac{\gamma (t-s)}{2\epsilon}}\E[\|X^\epsilon(s)\|^2]\diff s.\label{eq:ineqYeps}
\end{align}
Plugging the inequality~\eqref{eq:ineqYeps} in~\eqref{eq:ineqXeps}, one obtains
\begin{align*}
\E[\|X^\epsilon(t)\|^2]&\le C(T)\bigl(1+\|x_0\|^2+\|y_0\|^2\bigr)+C(T)\int_0^t (t-s)^{\alpha-1}\E[\|X^\epsilon(s)\|^2]\diff s\\
&\quad +\frac{C}{\epsilon}\int_{0}^{t}(t-s)^{\alpha-1}\int_{0}^{s}e^{-\frac{\gamma(s-r)}{2\epsilon}}\E[\|X^\epsilon(r)\|^2]\diff r \diff s.
\end{align*}
Applying the Fubini theorem, for all $t\in[0,T]$ one obtains
\begin{align*}
\frac{1}{\epsilon}\int_{0}^{t}(t-s)^{\alpha-1}\int_{0}^{s}e^{-\frac{\gamma(s-r)}{2\epsilon}}\E[\|X^\epsilon(r)\|^2]\diff r \diff s&=\frac{1}{\epsilon}\int_{0}^{t}\int_{r}^{t}(t-s)^{\alpha-1}e^{-\frac{\gamma(s-r)}{2\epsilon}} \diff s \E[\|X^\epsilon(r)\|^2]\diff r.
\end{align*}
In addition, applying the change of variable $s=(1-\theta)r+\theta t$ for $\theta\in[0,1]$ one has
\begin{align*}
\frac{1}{\epsilon}\int_{r}^{t}(t-s)^{\alpha-1}e^{-\frac{\gamma(s-r)}{2\epsilon}} \diff s&=(t-r)^{\alpha-1}\int_{0}^{1}(1-\theta)^{\alpha-1}\frac{t-r}{\epsilon}e^{-\frac{\gamma \theta (t-r)}{2\epsilon}} \diff \theta.
\end{align*}
The integral on the right-hand side above can be bounded as follows: for all $\epsilon\in(0,1)$ one has
\begin{align*}
\int_{0}^{1}(1-\theta)^{\alpha-1}\frac{t-r}{\epsilon}e^{-\frac{\gamma \theta (t-r)}{2\epsilon}} \diff\theta&=\int_{0}^{1/2}(1-\theta)^{\alpha-1}\frac{t-r}{\epsilon}e^{-\frac{\gamma \theta (t-r)}{2\epsilon}} \diff\theta\\
&+\int_{1/2}^{1}(1-\theta)^{\alpha-1}\frac{t-r}{\epsilon}e^{-\frac{\gamma \theta (t-r)}{2\epsilon}} \diff\theta\\
&\le 2^{\alpha-1}\int_{0}^{1/2}\frac{t-r}{\epsilon}e^{-\frac{\gamma \theta (t-r)}{2\epsilon}} \diff\theta\\
&+\frac{2e^{-1}}{\gamma}\int_{1/2}^{1}(1-\theta)^{\alpha-1} \diff\theta\\
&\le C
\end{align*}
where $C\in(0,\infty)$ is independent of $\epsilon$, using the elementary upper bounds
\begin{align*}
\frac{t-r}{\epsilon}e^{-\frac{\gamma \theta (t-r)}{2\epsilon}}\le \frac{\underset{z\ge 0}\sup~ze^{-z}}{\gamma\theta}\le \frac{2e^{-1}}{\gamma},\quad \forall~\theta\in[1/2,1].
\end{align*}
Combining the upper bounds obtained above, for all $t\in[0,T]$ one has
\[
\E[\|X^\epsilon(t)\|^2]\le C(T)\bigl(1+\|x_0\|^2+\|y_0\|^2\bigr)+C(T)\int_0^t (t-s)^{\alpha-1}\E[\|X^\epsilon(s)\|^2]\diff s,
\]
where $C(T)\in(0,\infty)$ does not depend on $\epsilon\in(0,1)$. Applying a version of the Gr\"onwall inequality, one obtains
\begin{equation}\label{eq:boundXeps}
\underset{t\in[0,T]}\sup~\E[\|X^\epsilon(t)\|^2]\le C(T)\bigl(1+\|x_0\|^2+\|y_0\|^2\bigr).
\end{equation}
Plugging the inequality~\eqref{eq:boundXeps} in the auxiliary inequality~\eqref{eq:ineqYeps}, one obtains
\begin{equation}\label{eq:boundYeps}
\underset{t\in[0,T]}\sup~\E[\|Y^\epsilon(t)\|^2]\le C(T)\bigl(1+\|x_0\|^2+\|y_0\|^2\bigr).
\end{equation}
Note that $C(T)\in(0,\infty)$ appearing in the upper bounds~\eqref{eq:boundXeps} and~\eqref{eq:boundYeps} above does not depend on $\epsilon\in (0,1)$. Therefore one obtains the inequality~\eqref{eq:boundXepsYeps} and the proof of Proposition~\ref{propo:boundXepsYeps} is completed.
\end{proof}

\begin{propo}\label{propo:boundXbar}
For any time $T\in(0,\infty)$, there exists $C(T)\in(0,\infty)$ such that for any initial value $x_0\in\R^p$ one has
\begin{equation}\label{eq:boundXbar}
\underset{t\in[0,T]}\sup~\|\overline{X}(t)\| \le C(T)\bigl(1+\|x_0\|\bigr).
\end{equation}
\end{propo}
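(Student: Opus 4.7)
The plan is to mimic the argument used for $X^\epsilon$ in Proposition~\ref{propo:boundXepsYeps}, but in a much simpler deterministic setting since the averaged equation~\eqref{eq:averaged} has no fast component and no stochastic integral. The only tools needed are the linear growth of the averaged coefficient $\overline{f}$, established in~\eqref{eq:lineargrowth2}, and a version of the Gr\"onwall inequality suited to the weakly singular kernel $(t-s)^{\alpha-1}$.

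First I would take the norm on both sides of~\eqref{eq:averaged} and apply the triangle inequality to obtain
\[
\|\overline{X}(t)\|\le \|x_0\|+\frac{1}{\Gamma(\alpha)}\int_{0}^{t}(t-s)^{\alpha-1}\|\overline{f}(\overline{X}(s))\|\diff s,\qquad t\in[0,T].
\]
Using the linear growth bound~\eqref{eq:lineargrowth2} there is $C\in(0,\infty)$ with $\|\overline{f}(x)\|\le C(1+\|x\|)$ for all $x\in\R^p$, and since $\int_{0}^{t}(t-s)^{\alpha-1}\diff s=t^\alpha/\alpha\le T^\alpha/\alpha$, the inequality above becomes
\[
\|\overline{X}(t)\|\le \|x_0\|+C(T)+C(T)\int_{0}^{t}(t-s)^{\alpha-1}\|\overline{X}(s)\|\diff s,\qquad t\in[0,T],
\]
where $C(T)\in(0,\infty)$ depends only on $T$, $\alpha$ and the linear growth constant of $\overline{f}$.

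Finally I would close the estimate by invoking the generalized (fractional) Gr\"onwall inequality for weakly singular kernels — the very same tool that is used in the proof of Proposition~\ref{propo:boundXepsYeps} to pass from the inequality preceding~\eqref{eq:boundXeps} to~\eqref{eq:boundXeps} itself. This yields $\sup_{t\in[0,T]}\|\overline{X}(t)\|\le C(T)(1+\|x_0\|)$, which is the desired bound~\eqref{eq:boundXbar}.

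I do not anticipate a genuine obstacle here: the proof is a direct, one-page application of linear growth plus fractional Gr\"onwall, and is strictly easier than the proof of Proposition~\ref{propo:boundXepsYeps} because there is no fast component, no stochastic integral, and no need to track the $\epsilon$-dependence. The only point worth recording for the reader is the choice of the correct version of Gr\"onwall's lemma adapted to the kernel $(t-s)^{\alpha-1}$, which has already implicitly been used in the previous proposition.
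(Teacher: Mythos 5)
Your proposal is correct and follows essentially the same route as the paper: take norms in the averaged equation, use the linear growth of $\overline{f}$ coming from its Lipschitz continuity, bound $\int_0^t(t-s)^{\alpha-1}\diff s$ by $T^\alpha/\alpha$, and conclude with a Gr\"onwall inequality adapted to the weakly singular kernel. No gap to report.
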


\begin{proof}[Proof of Proposition~\ref{propo:boundXbar}]
Let $T\in(0,\infty)$ and $x_0\in\R^p$ be given.

Since the mapping $\overline{f}$ is globally Lipschitz continuous, it has at most linear growth. As a result, there exists $C\in(0,\infty)$ such that for all $t\in[0,T]$ one has
\begin{align*}
\|\overline{X}(t)\|&\le \|x_0\|+C\int_{0}^{t}(t-s)^{\alpha-1}\diff s +C\int_{0}^{t}(t-s)^{\alpha-1} \|\overline{X}(s)\| \diff s\\
&\le C(\|x_0\|+T^\alpha)+C\int_{0}^{t}(t-s)^{\alpha-1} \|\overline{X}(s)\| \diff s.
\end{align*}
Applying a version of the Gr\"onwall lemma yields the inequality~\eqref{eq:boundXbar} and concludes the proof of Proposition~\ref{propo:boundXbar}.
\end{proof}

\subsection{Regularity properties}

Below, the following elementary inequality is employed.
\begin{lemma}\label{lem:ineq}
For all $\beta\in[0,\alpha]$, there exists $C_\beta\in(0,\infty)$ such that one has
\begin{equation}\label{eq:ineq}
|r_2^{\alpha-1}-r_1^{\alpha-1}|\le C_\beta(T)|r_2-r_1|^{\beta}\min(r_1,r_2)^{\alpha-\beta-1},\quad \forall~r_1,r_2\in(0,\infty).
\end{equation}
\end{lemma}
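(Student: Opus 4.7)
By symmetry I may assume without loss of generality that $0 < r_1 \le r_2$, so that $\min(r_1,r_2) = r_1$, and since $\alpha-1 < 0$ the map $r \mapsto r^{\alpha-1}$ is decreasing, giving $|r_2^{\alpha-1} - r_1^{\alpha-1}| = r_1^{\alpha-1} - r_2^{\alpha-1}$. The plan is then to split the proof into two regimes, according to whether the increment $r_2 - r_1$ is comparable to $r_1$ or much larger, and to use a different elementary bound in each regime.

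\emph{Regime 1: $r_2 \le 2r_1$.} Here the fundamental theorem of calculus gives
\[
r_1^{\alpha-1} - r_2^{\alpha-1} = (1-\alpha)\int_{r_1}^{r_2} r^{\alpha-2}\diff r \le (1-\alpha)(r_2-r_1)\, r_1^{\alpha-2},
\]
using $\alpha-2<0$ to bound $r^{\alpha-2} \le r_1^{\alpha-2}$ on the interval. I then write $(r_2-r_1) = (r_2-r_1)^\beta (r_2-r_1)^{1-\beta}$ and exploit the regime assumption $r_2 - r_1 \le r_1$ to absorb $(r_2-r_1)^{1-\beta} \le r_1^{1-\beta}$, which recombines the powers of $r_1$ into exactly $r_1^{\alpha-\beta-1}$.

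\emph{Regime 2: $r_2 > 2r_1$.} Here I drop the (positive) term $r_2^{\alpha-1}$ and use the crude bound $r_1^{\alpha-1} - r_2^{\alpha-1} \le r_1^{\alpha-1}$. The regime assumption gives $r_1 \le r_2 - r_1$, hence $r_1^\beta \le (r_2-r_1)^\beta$ (valid for $\beta \ge 0$, and trivial for $\beta = 0$), and multiplying by $r_1^{\alpha-\beta-1}$ yields $r_1^{\alpha-1} \le (r_2-r_1)^\beta\, r_1^{\alpha-\beta-1}$, which is the required inequality.

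There is no real obstacle here: the only subtlety is realizing that a single case distinction (small vs.\ large increment relative to the base point) is enough, because the naive mean-value bound alone would require $r_2 - r_1$ to be comparable to $r_1$, and the naive triangle bound alone would require the opposite. Combining the two regimes yields the lemma with a constant $C_\beta = \max(1,1-\alpha)$ that depends only on $\beta$ (and $\alpha$), independent of $T$; the notation $C_\beta(T)$ in the statement appears to be a harmless typo since no $T$ enters the argument.
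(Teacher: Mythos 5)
Your proof is correct. It rests on the same two elementary estimates as the paper's proof — the mean-value (fundamental theorem of calculus) bound $|r_2^{\alpha-1}-r_1^{\alpha-1}|\le(1-\alpha)|r_2-r_1|\min(r_1,r_2)^{\alpha-2}$ and the crude monotonicity bound $|r_2^{\alpha-1}-r_1^{\alpha-1}|\le \min(r_1,r_2)^{\alpha-1}$ (the paper uses $2\min(r_1,r_2)^{\alpha-1}$ via the triangle inequality) — but you combine them differently. The paper writes $|r_2^{\alpha-1}-r_1^{\alpha-1}|=|r_2^{\alpha-1}-r_1^{\alpha-1}|^{\beta}\,|r_2^{\alpha-1}-r_1^{\alpha-1}|^{1-\beta}$ and applies one bound to each factor, a one-line multiplicative interpolation with constant $(1-\alpha)^{\beta}2^{1-\beta}$ that needs no case distinction. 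You instead split according to whether $r_2-r_1\le r_1$ or $r_2-r_1>r_1$ and use whichever single bound is effective in that regime (your Regime 1 contains the same interpolation trick, but applied to the increment $r_2-r_1$ rather than to the difference of powers). Both arguments are equally elementary; yours makes explicit which estimate dominates in which regime and gives the marginally cleaner constant $\max(1,1-\alpha)=1$, while the paper's avoids the dichotomy altogether. Your closing observation is also accurate: the constant depends only on $\alpha$ and $\beta$, not on $T$, so the notation $C_\beta(T)$ in the displayed inequality~\eqref{eq:ineq} is indeed a slip (the statement itself correctly asserts existence of $C_\beta$).
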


\begin{proof}[Proof of Lemma~\ref{lem:ineq}]
On the one hand, applying the fundamental theorem of calculus, one obtains for all $r_1,r_2\in(0,\infty)$
\[
|r_2^{\alpha-1}-r_1^{\alpha-1}|\le (1-\alpha) |r_2-r_1|\min(r_1,r_2)^{\alpha-2}.
\]
On the other hand, applying the triangle inequality, one obtains for all $r_1,r_2\in(0,\infty)$
\[
|r_2^{\alpha-1}-r_1^{\alpha-1}|\le |r_2^{\alpha-1}| + |r_1^{\alpha-1}|\le 2\min(r_1,r_2)^{\alpha-1}.
\]
Given $\beta\in[0,\alpha]$, an interpolation of the two inequalities above gives for all $r_1,r_2\in(0,\infty)$
\begin{align*}
|r_2^{\alpha-1}-r_1^{\alpha-1}|&\le |r_2^{\alpha-1}-r_1^{\alpha-1}|^{\beta} |r_2^{\alpha-1}-r_1^{\alpha-1}|^{1-\beta}\\
&\le (1-\alpha)^{\beta}2^{1-\beta} |r_2-r_1|^\beta \min(r_1,r_2)^{\beta(\alpha-2)+(1-\beta)(\alpha-1)},
\end{align*}
with $\beta(\alpha-2)+(1-\beta)(\alpha-1)=\alpha-\beta-1$. The proof of Lemma~\ref{lem:ineq} is thus completed.
\end{proof}

\begin{propo}\label{propo:regulXeps}
For all $\beta\in(0,\alpha)$ and any time $T\in(0,\infty)$, there exists $C_\beta(T)\in(0,\infty)$ such that, any initial values $x_0\in\R^p$ and $y_0\in\R^q$, one has
\begin{equation}\label{eq:regulXeps}
\bigl(\E[\|X^\epsilon(t_2)-X^\epsilon(t_1)\|^2]\bigr)^{\frac12}\le C_\beta(T)\bigl(1+\|x_0\|+\|y_0\|\bigr)|t_2-t_1|^\beta,\quad \forall~t_1,t_2\in[0,T].
\end{equation}
\end{propo}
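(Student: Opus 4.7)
The plan is to estimate $X^\epsilon(t_2)-X^\epsilon(t_1)$ directly from the fractional integral representation in~\eqref{eq:system} by splitting it into a ``new mass'' integral over $[t_1,t_2]$ and an ``old kernel difference'' integral over $[0,t_1]$. Assume without loss of generality that $0\le t_1\le t_2\le T$. Then one can write
\[
\Gamma(\alpha)\bigl(X^\epsilon(t_2)-X^\epsilon(t_1)\bigr)=\int_{t_1}^{t_2}(t_2-s)^{\alpha-1}f(X^\epsilon(s),Y^\epsilon(s))\diff s+\int_0^{t_1}\bigl[(t_2-s)^{\alpha-1}-(t_1-s)^{\alpha-1}\bigr]f(X^\epsilon(s),Y^\epsilon(s))\diff s,
\]
and treat the two pieces independently.

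For the first piece, I would apply Cauchy--Schwarz with the weight $(t_2-s)^{\alpha-1}$, use the linear growth~\eqref{eq:lineargrowth} of $f$, and then invoke the uniform moment bound~\eqref{eq:boundXepsYeps}. Since $\int_{t_1}^{t_2}(t_2-s)^{\alpha-1}\diff s=(t_2-t_1)^\alpha/\alpha$, this yields a contribution of order $(t_2-t_1)^\alpha$, which is absorbed into $C_\beta(T)|t_2-t_1|^\beta$ because $\beta<\alpha\le 1$ and $t_2-t_1\le T$. For the second piece, the key input is Lemma~\ref{lem:ineq}: for $s\in(0,t_1)$ one has $\min(t_1-s,t_2-s)=t_1-s$, so
\[
|(t_2-s)^{\alpha-1}-(t_1-s)^{\alpha-1}|\le C_\beta|t_2-t_1|^\beta(t_1-s)^{\alpha-\beta-1}.
\]
Applying Cauchy--Schwarz with weight $(t_1-s)^{\alpha-\beta-1}$ and then the linear growth of $f$ together with Proposition~\ref{propo:boundXepsYeps} reduces the problem to bounding $\int_0^{t_1}(t_1-s)^{\alpha-\beta-1}\diff s=t_1^{\alpha-\beta}/(\alpha-\beta)\le T^{\alpha-\beta}/(\alpha-\beta)$. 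Crucially, the restriction $\beta<\alpha$ is exactly what makes this integral finite, which is why $\beta=\alpha$ is excluded from the statement.

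Combining the two estimates and taking square roots produces the announced bound $C_\beta(T)(1+\|x_0\|+\|y_0\|)|t_2-t_1|^\beta$, uniformly in $\epsilon\in(0,1)$ because the moment bound from Proposition~\ref{propo:boundXepsYeps} is itself $\epsilon$-uniform. The main obstacle I expect is the second term: one must apply the Lemma~\ref{lem:ineq} interpolation at the correct exponent and carefully handle the resulting singular kernel $(t_1-s)^{\alpha-\beta-1}$ via Cauchy--Schwarz in a way that preserves the factor $|t_2-t_1|^\beta$; the first term is essentially routine once one notes that $(t_2-t_1)^\alpha\lesssim T^{\alpha-\beta}(t_2-t_1)^\beta$.
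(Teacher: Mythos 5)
Your proposal is correct and follows essentially the same route as the paper: the same splitting into the $[t_1,t_2]$ integral and the $[0,t_1]$ kernel-difference integral, the same use of Lemma~\ref{lem:ineq} with $\min(t_1-s,t_2-s)=t_1-s$, and the same $\epsilon$-uniform moment bound from Proposition~\ref{propo:boundXepsYeps}. The only cosmetic difference is that you bound the stochastic integrand via a weighted Cauchy--Schwarz inequality where the paper uses the Minkowski (integral triangle) inequality in $L^2(\Omega)$; both yield the same estimates.
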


\begin{proof}[Proof of Proposition~\ref{propo:regulXeps}]
Without loss of generality, assume that $0\le t_1<t_2\le T$, then one can decompose $X^\epsilon(t_2)-X^\epsilon(t_1)$ as follows: one has
\begin{align*}
X^\epsilon(t_2)-X^\epsilon(t_1)&=\frac{1}{\Gamma(\alpha)}\int_{0}^{t_2}(t_2-s)^{\alpha-1}f(X^\epsilon(s),Y^\epsilon(s))\diff s-\frac{1}{\Gamma(\alpha)}\int_{0}^{t_1}(t_1-s)^{\alpha-1}f(X^\epsilon(s),Y^\epsilon(s))\diff s\\
&=\frac{1}{\Gamma(\alpha)}\int_{0}^{t_1}\left[(t_2-s)^{\alpha-1}-(t_1-s)^{\alpha-1}\right]f(X^\epsilon(s),Y^\epsilon(s))\diff s\\
&+\frac{1}{\Gamma(\alpha)}\int_{t_1}^{t_2}(t_2-s)^{\alpha-1}f(X^\epsilon(s),Y^\epsilon(s))\diff s.
\end{align*}
Recall that the mapping $f$ has at most linear growth, owing to the inequality~\eqref{eq:lineargrowth}, thus applying the inequality~\eqref{eq:boundXepsYeps} from Proposition~\ref{propo:boundXepsYeps}, one obtains
\begin{align*}
\underset{s\in[0,T]}\sup~\bigl(\E[\|f(X^\epsilon(s),Y^\epsilon(s))\|^2]\bigr)^{\frac12}&\le C\Bigl(1+\underset{s\in[0,T]}\sup~\bigl(\E[\|X^\epsilon(s)\|^2]\bigr)^{\frac12}+\underset{s\in[0,T]}\sup~\bigl(\E[\|Y^\epsilon(s)\|^2]\bigr)^{\frac12}\Bigr)\\
&\le C\bigl(1+\|x_0\|+\|y_0\|\bigr),
\end{align*}
where $C=C(T)$ only depends on $T$, and is independent of $\epsilon$. Applying the Minkowski inequality, one thus obtains
\begin{align*}
\bigl(\E[\|X^\epsilon(t_2)-X^\epsilon(t_1)\|^2]\bigr)^{\frac12}&\le C\bigl(1+\|x_0\|+\|y_0\|\bigr)\int_{0}^{t_1}\left|(t_2-s)^{\alpha-1}-(t_1-s)^{\alpha-1}\right|\diff s\\
&+C\bigl(1+\|x_0\|+\|y_0\|\bigr)\int_{t_1}^{t_2}(t_2-s)^{\alpha-1}\diff s.
\end{align*}
For the first term, applying the inequality~\eqref{eq:ineq} from Lemma~\ref{lem:ineq} (with $\beta\in(0,\alpha)$, $r_2=t_2-s$ and $r_1=t_1-s$), one obtains for all $t_1,t_2\in[0,T]$
\begin{align*}
\int_{0}^{t_1}\left|(t_2-s)^{\alpha-1}-(t_1-s)^{\alpha-1}\right|\diff s &\le C_\beta|t_2-t_1|^\beta \int_{0}^{t_1}(t_1-s)^{\alpha-\beta-1}\diff s\\
&\le \frac{C_\beta T^{\alpha-\beta}}{\alpha-\beta}|t_2-t_1|^\beta. 
\end{align*}
For the second term, one obtains for all $t_1,t_2\in[0,T]$
\[
\int_{t_1}^{t_2}(t_2-s)^{\alpha-1}\diff s=\frac{|t_2-t_1|^\alpha}{\alpha}\le \frac{T^{\alpha-\beta}}{\alpha}|t_2-t_1|^\beta.
\]
Gathering the estimates yields the inequality~\eqref{eq:regulXeps} and concludes the proof of Proposition~\ref{propo:regulXeps}.
\end{proof}

\begin{propo}\label{propo:regulXbar}
For all $\beta\in(0,\alpha)$ and any time $T\in(0,\infty)$, there exists $C_\beta(T)\in(0,\infty)$ such that for any initial value $x_0\in\R^p$, one has
\begin{equation}\label{eq:regulXbar}
\|\overline{X}(t_2)-\overline{X}(t_1)\|\le C_\beta(T)\bigl(1+\|x_0\|\bigr)|t_2-t_1|^\beta,\quad \forall~t_1,t_2\in[0,T].
\end{equation}
\end{propo}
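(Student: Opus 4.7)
The plan is to mirror the proof of Proposition~\ref{propo:regulXeps}, replacing $f(X^\epsilon,Y^\epsilon)$ by $\overline{f}(\overline{X})$ and using that $\overline{X}$ is deterministic, so no expectations or Minkowski inequality are needed.

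Assuming without loss of generality $0\le t_1<t_2\le T$, I would decompose
\begin{align*}
\overline{X}(t_2)-\overline{X}(t_1)&=\frac{1}{\Gamma(\alpha)}\int_{0}^{t_1}\bigl[(t_2-s)^{\alpha-1}-(t_1-s)^{\alpha-1}\bigr]\overline{f}(\overline{X}(s))\diff s\\
&\quad +\frac{1}{\Gamma(\alpha)}\int_{t_1}^{t_2}(t_2-s)^{\alpha-1}\overline{f}(\overline{X}(s))\diff s.
\end{align*}
Since $\overline{f}$ has at most linear growth by~\eqref{eq:lineargrowth2}, combining with the moment bound~\eqref{eq:boundXbar} from Proposition~\ref{propo:boundXbar} gives $\sup_{s\in[0,T]}\|\overline{f}(\overline{X}(s))\|\le C(T)(1+\|x_0\|)$. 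Taking norms and using the triangle inequality then reduces the problem to bounding the two scalar integrals.

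For the first integral, I would apply Lemma~\ref{lem:ineq} with $r_2=t_2-s$ and $r_1=t_1-s$ for the given $\beta\in(0,\alpha)$ to obtain
\[
\int_{0}^{t_1}\bigl|(t_2-s)^{\alpha-1}-(t_1-s)^{\alpha-1}\bigr|\diff s\le C_\beta |t_2-t_1|^\beta \int_{0}^{t_1}(t_1-s)^{\alpha-\beta-1}\diff s\le \frac{C_\beta T^{\alpha-\beta}}{\alpha-\beta}|t_2-t_1|^\beta.
\]
For the second integral, the elementary computation $\int_{t_1}^{t_2}(t_2-s)^{\alpha-1}\diff s=|t_2-t_1|^\alpha/\alpha\le (T^{\alpha-\beta}/\alpha)|t_2-t_1|^\beta$ handles the remaining term. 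Gathering these yields~\eqref{eq:regulXbar}.

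There is no real obstacle here: the argument is a deterministic and slightly simpler copy of the proof of Proposition~\ref{propo:regulXeps}, and the only ingredients needed are the linear growth of $\overline{f}$, the a priori bound from Proposition~\ref{propo:boundXbar}, and the interpolation inequality of Lemma~\ref{lem:ineq}.
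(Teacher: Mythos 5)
Your proof is correct and follows essentially the same route as the paper: the same decomposition of $\overline{X}(t_2)-\overline{X}(t_1)$, the bound $\sup_{s\in[0,T]}\|\overline{f}(\overline{X}(s))\|\le C(T)(1+\|x_0\|)$ from linear growth and Proposition~\ref{propo:boundXbar}, and then the two integral estimates via Lemma~\ref{lem:ineq}, exactly as the paper does by repeating the arguments of Proposition~\ref{propo:regulXeps}.
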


\begin{proof}[Proof of Proposition~\ref{propo:regulXbar}]
Without loss of generality, assume that $0\le t_1<t_2\le T$, then one can decompose $\overline{X}(t_2)-\overline{X}(t_1)$ as follows: one has
\begin{align*}
\overline{X}(t_2)-\overline{X}(t_1)&=\frac{1}{\Gamma(\alpha)}\int_{0}^{t_2}(t_2-s)^{\alpha-1}\overline{f}(\overline{X}(s))\diff s-\frac{1}{\Gamma(\alpha)}\int_{0}^{t_1}(t_1-s)^{\alpha-1}\overline{f}(\overline{X}(s))\diff s\\
&=\frac{1}{\Gamma(\alpha)}\int_{0}^{t_1}\left[(t_2-s)^{\alpha-1}-(t_1-s)^{\alpha-1}\right]\overline{f}(\overline{X}(s))\diff s\\
&+\frac{1}{\Gamma(\alpha)}\int_{t_1}^{t_2}(t_2-s)^{\alpha-1}\overline{f}(\overline{X}(s))\diff s.
\end{align*}
Recall that the mapping $\overline{f}$ is globally Lipschitz continuous and thus has at most linear growth, thus applying the inequality~\eqref{eq:boundXbar} from Proposition~\ref{propo:boundXbar}, one obtains
\[
\underset{t\in[0,T]}\sup~\|\overline{f}(\overline{X}(t))\|\le C\bigl(1+\underset{t\in[0,T]}\sup~\|\overline{X}(t)\|\bigr)\le C\bigl(1+\|x_0\|\bigr),
\]
where $C=C(T)$ only depends on $T$. Applying the Minkowski inequality, one thus obtains
\begin{align*}
\|\overline{X}(t_2)-\overline{X}(t_1)\|&\le C\bigl(1+\|x_0\|\bigr)\int_{0}^{t_1}\left|(t_2-s)^{\alpha-1}-(t_1-s)^{\alpha-1}\right|\diff s\\
&+C\bigl(1+\|x_0\|\bigr)\int_{t_1}^{t_2}(t_2-s)^{\alpha-1}\diff s.
\end{align*}
Repeating the arguments from the proof of Proposition~\ref{propo:regulXeps} yields the inequality~\eqref{eq:regulXbar} and concludes the proof of Proposition~\ref{propo:regulXbar}.
\end{proof}

\section{Proof of the main results}\label{sec:proofs}

\subsection{Auxiliary system}

Let $\delta\in(0,1)$ be an auxiliary parameter. Without loss of generality, it is assumed that $\delta=T/N$ for some integer $N\in\N$. For all $n\in\{0,\ldots,N\}$, let $t_n=n\delta$.

For all $t\in[0,T]$, set
\[
n_\delta(t)=\lfloor \frac{t}{\delta}\rfloor,
\]
and note that $n_{\delta}(t)=n$ if and only if $t_n\le t<t_{n+1}$.

For any auxiliary parameter $\delta\in(0,1)$ and any $\epsilon\in(0,1)$, introduce the auxiliary process $\bigl(\widehat{X}^{\epsilon,\delta}(t),\widehat{Y}^{\epsilon,\delta}(t)\bigr)_{t\in[0,T]}$ defined as the solution to the system
\begin{equation}\label{eq:aux}
\left\lbrace
\begin{aligned}
\widehat{X}^{\epsilon,\delta}(t)&=x_0+\frac{1}{\Gamma(\alpha)}\int_{0}^{t}(t-s)^{\alpha-1}f\left(X^{\epsilon}(t_{n_\delta(s)}),\widehat{Y}^{\epsilon,\delta}(s)\right) \diff s, \quad \forall~t\ge 0,\\
\diff \widehat{Y}^{\epsilon,\delta}(t)&=\frac{1}{\epsilon}b(X^{\epsilon}(t_{n_\delta(t)}),\widehat{Y}^{\epsilon,\delta}(t))\diff t+\frac{1}{\sqrt{\epsilon}}\sigma(X^{\epsilon}(t_{n_\delta(t)}),\widehat{Y}^{\epsilon,\delta}(t))\diff B(t), \quad \forall~t\ge 0,\\
\widehat{Y}^{\epsilon,\delta}(0)&=y_0,
\end{aligned}
\right.
\end{equation}
where $\bigl(X^\epsilon(t)\bigr)_{t\in[0,T]}$ is given by solving the multiscale stochastic system~\eqref{eq:system}.

More precisely, the auxiliary process $\bigl(\widehat{X}^{\epsilon,\delta}(t),\widehat{Y}^{\epsilon,\delta}(t)\bigr)_{t\in[0,T]}$ is continuous on the interval $[0,T]$, and for any $n\in\{0,\ldots,N-1\}$, on the interval $[t_n,t_{n+1}]$, it is defined as follows. First, the process $\bigl(\widehat{Y}^{\epsilon,\delta}(t)\bigr)_{t\in[t_n,t_{n+1}]}$ is solution to the stochastic differential equation
\[
\diff \widehat{Y}^{\epsilon,\delta}(t)=\frac{1}{\epsilon}b(X^{\epsilon}(t_{n}),\widehat{Y}^{\epsilon,\delta}(t))\diff t+\frac{1}{\sqrt{\epsilon}}\sigma(X^{\epsilon}(t_{n}),\widehat{Y}^{\epsilon,\delta}(t))\diff B(t), \quad \forall~t\in [t_n,t_{n+1}].
\]
Second, the process $\bigl(\widehat{X}^{\epsilon,\delta}(t)\bigr)_{t\in[t_n,t_{n+1}]}$ is given by the following expression: for all $t\in[t_n,t_{n+1}]$ one has
\begin{align*}
\widehat{X}^{\epsilon,\delta}(t)&=x_0+\frac{1}{\Gamma(\alpha)}\sum_{k=0}^{n-1}\int_{t_{k}}^{t_{k+1}}(t-s)^{\alpha-1}f\left(X^{\epsilon}(t_{k}),\widehat{Y}^{\epsilon,\delta}(s)\right) \diff s\\
&+\int_{t_n}^{t}(t-s)^{\alpha-1}f\left(X^{\epsilon}(t_{n}),\widehat{Y}^{\epsilon,\delta}(s)\right) \diff s.
\end{align*}
As a result, the auxiliary system~\eqref{eq:aux} admits a unique solution for any $\delta,\epsilon\in(0,1)$.

Decomposing the error as
\begin{equation}\label{eq:decomperror}
X^\epsilon(t)-\overline{X}(t)=X^\epsilon(t)-\widehat{X}^{\epsilon,\delta}(t)+\widehat{X}^{\epsilon,\delta}(t)-\overline{X}(t),\quad \forall~t\in[0,T],
\end{equation}
Theorem~\ref{theo:1} is a straightforward consequence of Lemma~\ref{lem:errorX1} and Lemma~\ref{lem:errorX2} stated below.

\subsection{Auxiliary error bounds}

\begin{lemma}\label{lem:errorX1}
For all $\beta\in(0,\alpha)$ and any time $T\in(0,\infty)$, there exists $C_\beta(T)\in(0,\infty)$ such that, for any initial values $x_0\in\R^p$ and $y_0\in\R^q$, one has for all $\delta\in(0,1)$
\begin{equation}\label{eq:errorX1}
\underset{\epsilon\in(0,1)}\sup~\underset{t\in[0,T]}\sup~\E[\|X^\epsilon(t)-\widehat{X}^{\epsilon,\delta}(t)\|^2]\le C_\beta(T)(1+\|x_0\|+\|y_0\|)^2\delta^{2\beta}.
\end{equation}
\end{lemma}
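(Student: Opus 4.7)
The plan is to first bound the mean-square distance between the fast components $Y^\epsilon$ and $\widehat{Y}^{\epsilon,\delta}$, and then feed that bound into the integral representation of $X^\epsilon-\widehat{X}^{\epsilon,\delta}$. Throughout, set $Z^{\epsilon,\delta}(t)=Y^\epsilon(t)-\widehat{Y}^{\epsilon,\delta}(t)$ and write $\tau(s)=t_{n_\delta(s)}$, so that $|s-\tau(s)|\le\delta$ and, by Proposition~\ref{propo:regulXeps}, one has
\[
\bigl(\E[\|X^\epsilon(s)-X^\epsilon(\tau(s))\|^2]\bigr)^{\frac12}\le C_\beta(T)(1+\|x_0\|+\|y_0\|)\delta^\beta
\]
for every $\beta\in(0,\alpha)$.

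First, I would apply the It\^o formula to $\|Z^{\epsilon,\delta}(t)\|^2$. The difference of drifts and diffusions can be decomposed by inserting $b(X^\epsilon(\tau(t)),Y^\epsilon(t))$ and $\sigma(X^\epsilon(\tau(t)),Y^\epsilon(t))$. On the pieces that only vary in the second variable, the dissipation property~\eqref{eq:dissipation} applied with $x=X^\epsilon(\tau(t))$ contributes $-2\gamma\|Z^{\epsilon,\delta}(t)\|^2/\epsilon$. The remaining cross terms involve $\|X^\epsilon(t)-X^\epsilon(\tau(t))\|$ times $\|Z^{\epsilon,\delta}(t)\|$ (through the Lipschitz bounds on $b$ and $\sigma$); these are absorbed by Young's inequality, paying a factor $\eta\gamma\|Z^{\epsilon,\delta}(t)\|^2/\epsilon$ on one side and $C_\eta\|X^\epsilon(t)-X^\epsilon(\tau(t))\|^2/\epsilon$ on the other, for $\eta$ small enough to preserve the negative sign. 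This yields a differential inequality of the form
\[
\frac{\diff \E[\|Z^{\epsilon,\delta}(t)\|^2]}{\diff t}\le -\frac{\gamma}{\epsilon}\E[\|Z^{\epsilon,\delta}(t)\|^2]+\frac{C}{\epsilon}\E[\|X^\epsilon(t)-X^\epsilon(\tau(t))\|^2].
\]
Since $Z^{\epsilon,\delta}(0)=0$, Gr\"onwall gives
\[
\E[\|Z^{\epsilon,\delta}(t)\|^2]\le \frac{C}{\epsilon}\int_0^t e^{-\gamma(t-s)/\epsilon}\E[\|X^\epsilon(s)-X^\epsilon(\tau(s))\|^2]\diff s,
\]
and the $s$-integral is bounded uniformly in $\epsilon$ because $\epsilon^{-1}\int_0^t e^{-\gamma(t-s)/\epsilon}\diff s\le 1/\gamma$. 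Combined with the regularity estimate above, this produces
\[
\underset{t\in[0,T]}\sup~\E[\|Y^\epsilon(t)-\widehat{Y}^{\epsilon,\delta}(t)\|^2]\le C_\beta(T)(1+\|x_0\|+\|y_0\|)^2\delta^{2\beta},
\]
uniformly in $\epsilon\in(0,1)$.

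Second, I would bound the slow error directly from the integral representation
\[
X^\epsilon(t)-\widehat{X}^{\epsilon,\delta}(t)=\frac{1}{\Gamma(\alpha)}\int_0^t (t-s)^{\alpha-1}\bigl[f(X^\epsilon(s),Y^\epsilon(s))-f(X^\epsilon(\tau(s)),\widehat{Y}^{\epsilon,\delta}(s))\bigr]\diff s.
\]
Using the Lipschitz assumption on $f$ (Assumption~\ref{ass:F}) and the Minkowski inequality in $L^2(\Omega)$, the $L^2$-norm is controlled by
\[
\frac{L_f}{\Gamma(\alpha)}\int_0^t (t-s)^{\alpha-1}\Bigl(\bigl(\E[\|X^\epsilon(s)-X^\epsilon(\tau(s))\|^2]\bigr)^{\frac12}+\bigl(\E[\|Y^\epsilon(s)-\widehat{Y}^{\epsilon,\delta}(s)\|^2]\bigr)^{\frac12}\Bigr)\diff s.
\]
Both integrands are uniformly bounded by $C_\beta(T)(1+\|x_0\|+\|y_0\|)\delta^\beta$ by the preceding estimates, and $\int_0^t(t-s)^{\alpha-1}\diff s\le T^\alpha/\alpha$, so the inequality~\eqref{eq:errorX1} follows.

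The main obstacle is step one: getting the strictly negative feedback $-c\|Z^{\epsilon,\delta}\|^2/\epsilon$ to survive after accommodating the $X^\epsilon(t)-X^\epsilon(\tau(t))$ perturbations. The key technical point is the careful use of Young's inequality so that the coefficient in front of $\|Z^{\epsilon,\delta}\|^2$ stays negative, together with the uniform bound $\epsilon^{-1}\int_0^t e^{-\gamma(t-s)/\epsilon}\diff s\le 1/\gamma$ which ensures that the factor $1/\epsilon$ does not blow up the final constant.
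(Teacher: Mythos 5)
Your proposal is correct and follows essentially the same route as the paper: the paper isolates your first step as a separate statement (Lemma~\ref{lem:errorY}), proved exactly by this It\^o-formula/dissipation/Young argument combined with Proposition~\ref{propo:regulXeps}, and then deduces Lemma~\ref{lem:errorX1} by the same Lipschitz--Minkowski estimate on the fractional integral. The only cosmetic differences are which intermediate point you insert in the drift/diffusion decomposition (the paper uses $b(X^\epsilon(t),\widehat{Y}^{\epsilon,\delta}(t))$, you use $b(X^\epsilon(t_{n_\delta(t)}),Y^\epsilon(t))$, both fine since the dissipation is uniform in $x$) and that you apply Gr\"onwall once on $[0,T]$ with $Z^{\epsilon,\delta}(0)=0$, where the paper argues interval by interval with a discrete Gr\"onwall step; the two yield the same uniform-in-$\epsilon$ bound.
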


\begin{lemma}\label{lem:errorX2}
For all $\beta\in(0,\alpha)$ and any time $T\in(0,\infty)$, there exists $C_\beta(T)\in(0,\infty)$ such that, for any initial values $x_0\in\R^p$ and $y_0\in\R^q$, one has for all $\delta\in(0,1)$
\begin{equation}\label{eq:errorX2}
\underset{t\in[0,T]}\sup~\underset{\epsilon\to 0}\limsup~\E[\|\widehat{X}^{\epsilon,\delta}(t)-\overline{X}(t)\|^2]\le C_\beta(T)(1+\|x_0\|+\|y_0\|)^2\delta^{2\beta}.
\end{equation}
\end{lemma}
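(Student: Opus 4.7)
The plan is to decompose $\widehat{X}^{\epsilon,\delta}(t)-\overline{X}(t)=A_1^{\epsilon,\delta}(t)+A_2^{\epsilon,\delta}(t)$ where
\[
A_1^{\epsilon,\delta}(t)=\frac{1}{\Gamma(\alpha)}\int_0^t(t-s)^{\alpha-1}\bigl[f(X^\epsilon(t_{n_\delta(s)}),\widehat{Y}^{\epsilon,\delta}(s))-\overline{f}(X^\epsilon(t_{n_\delta(s)}))\bigr]\diff s
\]
encodes the averaging effect of the fast dynamics, and
\[
A_2^{\epsilon,\delta}(t)=\frac{1}{\Gamma(\alpha)}\int_0^t(t-s)^{\alpha-1}\bigl[\overline{f}(X^\epsilon(t_{n_\delta(s)}))-\overline{f}(\overline{X}(s))\bigr]\diff s
\]
will feed a fractional Gronwall argument for $E_\epsilon(t):=\E[\|\widehat{X}^{\epsilon,\delta}(t)-\overline{X}(t)\|^2]$. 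The hard part will be to show that $\E[\|A_1^{\epsilon,\delta}(t)\|^2]\to 0$ as $\epsilon\to 0$ with $\delta$ fixed, despite the singular kernel $(t-s)^{\alpha-1}$.

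For $A_2$, I would combine the Lipschitz continuity of $\overline{f}$ with the triangle inequality
\[
\|X^\epsilon(t_{n_\delta(s)})-\overline{X}(s)\|\le\|X^\epsilon(t_{n_\delta(s)})-X^\epsilon(s)\|+\|X^\epsilon(s)-\widehat{X}^{\epsilon,\delta}(s)\|+\|\widehat{X}^{\epsilon,\delta}(s)-\overline{X}(s)\|,
\]
whose first term is $O(\delta^\beta)$ in $L^2$ by Proposition~\ref{propo:regulXeps} (since $0\le s-t_{n_\delta(s)}\le\delta$), whose second term is $O(\delta^\beta)$ by Lemma~\ref{lem:errorX1}, and whose third term is what I want to control. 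The Cauchy--Schwarz inequality with the weight $(t-s)^{\alpha-1}\diff s$ together with $\int_0^t(t-s)^{\alpha-1}\diff s\le T^\alpha/\alpha$ then yields
\[
\E[\|A_2^{\epsilon,\delta}(t)\|^2]\le C_\beta(T)(1+\|x_0\|+\|y_0\|)^2\delta^{2\beta}+C(T)\int_0^t(t-s)^{\alpha-1}E_\epsilon(s)\diff s.
\]

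For $A_1$ I would expand $\E[\|A_1^{\epsilon,\delta}(t)\|^2]$ as a double integral on $[0,t]^2$, use symmetry to restrict to $s_1\le s_2$, and split according to whether $s_1,s_2$ belong to a common subinterval $[t_k,t_{k+1}]$ or to distinct ones ($s_1\in[t_k,t_{k+1}]$, $s_2\in[t_l,t_{l+1}]$ with $k<l$). The structural point is that, conditionally on $\mathcal{F}_{t_k}$, the process $\widehat{Y}^{\epsilon,\delta}$ on $[t_k,t_{k+1}]$ coincides in law with the $1/\epsilon$-time-rescaled frozen SDE driven by $X^\epsilon(t_k)$, so the semigroup estimate~\eqref{eq:ergosemigroupfrozenSDE} applies. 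Writing $g(s):=f(X^\epsilon(t_{n_\delta(s)}),\widehat{Y}^{\epsilon,\delta}(s))-\overline{f}(X^\epsilon(t_{n_\delta(s)}))$ and conditioning on $\mathcal{F}_{s_1}$ in the same-interval case (resp.\ on $\mathcal{F}_{t_l}$ in the other case), together with Cauchy--Schwarz and Proposition~\ref{propo:boundXepsYeps}, one obtains
\[
|\E[\langle g(s_1),g(s_2)\rangle]|\le C(1+\|x_0\|+\|y_0\|)^2e^{-\gamma(s_2-s_1)/\epsilon}
\]
in the same-interval case and the analog with $e^{-\gamma(s_2-t_l)/\epsilon}$ otherwise. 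The main obstacle is then the handling of the singular kernel: for the same-interval contribution, the substitution $u=s_2-s_1$ and the elementary bound $(t-s_2+u)^{\alpha-1}\le u^{\alpha-1}$ (valid since $t-s_2\ge 0$ and $\alpha-1<0$) shrink the inner integral to $\int_0^\infty u^{\alpha-1}e^{-\gamma u/\epsilon}\diff u=\Gamma(\alpha)\gamma^{-\alpha}\epsilon^\alpha$, and the outer $\int_0^t(t-s_2)^{\alpha-1}\diff s_2$ being bounded yields a contribution of order $\epsilon^\alpha$; for the different-interval contribution, performing the inner sum via $\sum_{k<l}\int_{t_k}^{t_{k+1}}(t-s_1)^{\alpha-1}\diff s_1\le T^\alpha/\alpha$ reduces the estimate to $C(T)(1+\|x_0\|+\|y_0\|)^2\int_0^t(t-s_2)^{\alpha-1}e^{-\gamma(s_2-t_{n_\delta(s_2)})/\epsilon}\diff s_2$, which tends to $0$ as $\epsilon\to 0$ for fixed $\delta$ by dominated convergence (pointwise convergence to $0$ off the grid, dominated by the integrable $(t-s_2)^{\alpha-1}$). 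Hence $\E[\|A_1^{\epsilon,\delta}(t)\|^2]\to 0$ as $\epsilon\to 0$ for each fixed $\delta$.

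Combining both pieces I obtain
\[
E_\epsilon(t)\le 2\E[\|A_1^{\epsilon,\delta}(t)\|^2]+2C_\beta(T)(1+\|x_0\|+\|y_0\|)^2\delta^{2\beta}+C(T)\int_0^t(t-s)^{\alpha-1}E_\epsilon(s)\diff s.
\]
Taking $\limsup_{\epsilon\to 0}$, with reverse Fatou applied to the integral (justified because $E_\epsilon$ is uniformly bounded on $[0,T]$ by Propositions~\ref{propo:boundXepsYeps}--\ref{propo:boundXbar} together with Lemma~\ref{lem:errorX1}), the $A_1$ contribution disappears and $F(t):=\limsup_{\epsilon\to 0}E_\epsilon(t)$ satisfies a linear fractional Gronwall inequality whose standard solution produces $F(t)\le C_\beta(T)(1+\|x_0\|+\|y_0\|)^2\delta^{2\beta}$ uniformly in $t\in[0,T]$. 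Taking the supremum over $t$ closes the argument.
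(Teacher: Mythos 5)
Your proposal is correct and follows essentially the same route as the paper: the key term $A_1^{\epsilon,\delta}$ is exactly the paper's $r_1^{\epsilon,\delta}$, treated identically by splitting same-interval and cross-interval pairs, conditioning on $\mathcal{F}_{s_1}$ resp. $\mathcal{F}_{t_\ell}$, and invoking the mixing bound~\eqref{eq:ergosemigroupfrozenSDE} for the frozen semigroup, followed by a singular-kernel Gr\"onwall argument after letting $\epsilon\to 0$. The only deviations are cosmetic: you lump the paper's $r_2^{\epsilon,\delta}+r_3^{\epsilon,\delta}+r_4^{\epsilon,\delta}$ into one term $A_2^{\epsilon,\delta}$ (using the regularity of $X^\epsilon$ instead of that of $\overline{X}$, plus Lemma~\ref{lem:errorX1}), and you pass the $\limsup_{\epsilon\to 0}$ through the Gr\"onwall integral by reverse Fatou with the uniform moment bound as dominating function, whereas the paper evaluates the Gr\"onwall term at the grid points $t_{n_\delta(s)}$ so that the integral is a finite sum and the limsup passes through trivially (this also sidesteps the minor measurability point for $s\mapsto\limsup_\epsilon E_\epsilon(s)$, which your continuity/uniform-bound remark handles); as a small bonus, your same-interval estimate even yields an explicit $O(\epsilon^\alpha)$ rate where the paper only needs dominated convergence.
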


The proof of Lemma~\ref{lem:errorX1} is based on the result stated in  Lemma~\ref{lem:errorY}.
\begin{lemma}\label{lem:errorY}
For all $\beta\in(0,\alpha)$ and any time $T\in(0,\infty)$, there exists $C_\beta(T)\in(0,\infty)$ such that, for any initial values $x_0\in\R^p$ and $y_0\in\R^q$, one has for all $\epsilon\in(0,1)$ and $\delta\in(0,1)$
\begin{equation}\label{eq:errorY}
\underset{t\in[0,T]}\sup~\E[\|Y^\epsilon(t)-\widehat{Y}^{\epsilon,\delta}(t)\|^2]\le C_\beta(T)(1+\|x_0\|+\|y_0\|)^2\delta^{2\beta}.
\end{equation}
\end{lemma}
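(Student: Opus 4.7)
The strategy is to exploit the dissipation property~\eqref{eq:dissipation} which provides exponential stability of the fast equation, and then to estimate the ``forcing'' coming from the fact that $\widehat{Y}^{\epsilon,\delta}$ uses the frozen slow value $X^\epsilon(t_{n_\delta(t)})$ instead of $X^\epsilon(t)$. Set $Z^{\epsilon,\delta}(t)=Y^\epsilon(t)-\widehat{Y}^{\epsilon,\delta}(t)$ and note that $Z^{\epsilon,\delta}(0)=0$. Subtracting the two stochastic differential equations and applying It\^o's formula to $\|Z^{\epsilon,\delta}(t)\|^2$, I would obtain
\[
\frac{\diff \E[\|Z^{\epsilon,\delta}(t)\|^2]}{\diff t}=\frac{2}{\epsilon}\E[\langle Z^{\epsilon,\delta}(t),\Delta b(t)\rangle]+\frac{1}{\epsilon}\E[\|\Delta\sigma(t)\|^2],
\]
where $\Delta b(t)=b(X^\epsilon(t),Y^\epsilon(t))-b(X^\epsilon(t_{n_\delta(t)}),\widehat{Y}^{\epsilon,\delta}(t))$ and similarly for $\Delta\sigma(t)$.

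The key step is to split each of $\Delta b(t)$ and $\Delta\sigma(t)$ as
\[
\Delta b(t)=\bigl[b(X^\epsilon(t),Y^\epsilon(t))-b(X^\epsilon(t),\widehat{Y}^{\epsilon,\delta}(t))\bigr]+\bigl[b(X^\epsilon(t),\widehat{Y}^{\epsilon,\delta}(t))-b(X^\epsilon(t_{n_\delta(t)}),\widehat{Y}^{\epsilon,\delta}(t))\bigr],
\]
and analogously for $\sigma$. The first bracket (for both $b$ and $\sigma$) produces the pair to which the dissipation inequality~\eqref{eq:dissipation} applies and yields a contribution bounded by $-\gamma\|Z^{\epsilon,\delta}(t)\|^2$; the second bracket is controlled by the Lipschitz constants $L_b,L_\sigma$ times $\|X^\epsilon(t)-X^\epsilon(t_{n_\delta(t)})\|$. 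Expanding the square $\|\Delta\sigma\|^2=\|a+b\|^2\le (1+\eta)\|a\|^2+(1+\eta^{-1})\|b\|^2$ and the inner product $2\langle Z^{\epsilon,\delta},\cdot\rangle$ by Young's inequality, with $\eta>0$ chosen small enough (for instance $\eta=\gamma/L_\sigma^2$) so that the residual coefficient in front of $\|Z^{\epsilon,\delta}(t)\|^2$ coming from the dissipation remains strictly negative, I expect to arrive at a differential inequality of the form
\[
\frac{\diff \E[\|Z^{\epsilon,\delta}(t)\|^2]}{\diff t}\le -\frac{\gamma}{2\epsilon}\E[\|Z^{\epsilon,\delta}(t)\|^2]+\frac{C}{\epsilon}\E[\|X^\epsilon(t)-X^\epsilon(t_{n_\delta(t)})\|^2],
\]
with $C\in(0,\infty)$ independent of $\epsilon$ and $\delta$.

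To conclude, I would apply the regularity estimate from Proposition~\ref{propo:regulXeps} with the chosen $\beta\in(0,\alpha)$, which yields
\[
\E[\|X^\epsilon(t)-X^\epsilon(t_{n_\delta(t)})\|^2]\le C_\beta(T)^2(1+\|x_0\|+\|y_0\|)^2(t-t_{n_\delta(t)})^{2\beta}\le C_\beta(T)^2(1+\|x_0\|+\|y_0\|)^2\delta^{2\beta},
\]
uniformly in $t\in[0,T]$. Plugging this into the differential inequality, integrating, and using
\[
\frac{1}{\epsilon}\int_0^t e^{-\frac{\gamma(t-s)}{2\epsilon}}\diff s\le \frac{2}{\gamma},
\]
together with $Z^{\epsilon,\delta}(0)=0$, produces the announced bound~\eqref{eq:errorY} uniformly in $\epsilon\in(0,1)$.

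I expect the main obstacle to be the careful balance in the quadratic expansion of $\|\Delta\sigma\|^2$: the factor $(1+\eta)$ in front of the ``dissipative'' piece $\|\sigma(X^\epsilon(t),Y^\epsilon(t))-\sigma(X^\epsilon(t),\widehat{Y}^{\epsilon,\delta}(t))\|^2$ must not destroy the strict negativity of the dissipation contribution $-\gamma\|Z^{\epsilon,\delta}\|^2$. Otherwise the argument is a standard combination of It\^o, dissipation and Gr\"onwall; the fractional nature of the slow equation enters only through Proposition~\ref{propo:regulXeps}, which limits the exponent to any $\beta<\alpha$.
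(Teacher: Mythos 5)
Your proposal is correct and takes essentially the same route as the paper: split the drift and diffusion increments into a dissipative part (comparing $Y^\epsilon$ and $\widehat{Y}^{\epsilon,\delta}$ at the same slow value $X^\epsilon(t)$) and a slow-increment part, apply It\^o's formula with the dissipation condition~\eqref{eq:dissipation}, Lipschitz bounds and Young's inequality to reach the differential inequality with coefficient $-\gamma/(2\epsilon)$ and forcing $\frac{C}{\epsilon}\E[\|X^\epsilon(t)-X^\epsilon(t_{n_\delta(t)})\|^2]$, then invoke Proposition~\ref{propo:regulXeps} and integrate. The only difference is cosmetic: the paper runs the Gr\"onwall step interval by interval with a discrete recursion at the grid points $t_n$, whereas you integrate the exponential kernel over $[0,t]$ directly (and you are in fact slightly more careful than the paper about the cross term in the quadratic variation, via the $(1+\eta)$ splitting); both yield the same bound, uniform in $\epsilon$.
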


\begin{rem}\label{rem:boundYaux}
Note that combining the moment bounds~\eqref{eq:boundXepsYeps} on $Y^\epsilon$ from Proposition~\ref{propo:boundXepsYeps} and the error bound~\eqref{eq:errorY} from Lemma~\ref{lem:errorY}, one obtains moment bounds for the auxiliary process $\widehat{Y}^{\epsilon,\delta}$, uniformly with respect to the parameters $\epsilon,\delta\in(0,1)$: for all $T\in(0,\infty)$ there exists $C(T)\in(0,\infty)$ such that, for any initial values $x_0\in\R^p$ and $y_0\in\R^q$, one has for all $\epsilon\in(0,1)$ and $\delta\in(0,1)$
\begin{equation}\label{eq:boundYaux}
\underset{t\in[0,T]}\sup~\E[\|\widehat{Y}^{\epsilon,\delta}(t)\|^2]\le C(T)\bigl(1+\|x_0\|+\|y_0\|\bigr)^2.
\end{equation}
\end{rem}

\begin{proof}[Proof of Lemma~\ref{lem:errorY}]
Owing to~\eqref{eq:system} and to~\eqref{eq:aux}, for all $t\in[0,T]$ one has
\begin{align*}
\diff\bigl(Y^\epsilon(t)-\widehat{Y}^{\epsilon,\delta}(t)\bigr)&=\frac{1}{\epsilon}\Bigl(b(X^\epsilon(t),Y^\epsilon(t))-b(X^\epsilon(t_{n_\delta(t)}),\widehat{Y}^{\epsilon,\delta}(t))\Bigr)\diff t\\
&~+\frac{1}{\sqrt{\epsilon}}\Bigl(\sigma(X^\epsilon(t),Y^\epsilon(t))-\sigma(X^\epsilon(t_{n_\delta(t)}),\widehat{Y}^{\epsilon,\delta}(t))\Bigr)\diff B(t)\\
&=\frac{1}{\epsilon}\Bigl(b(X^\epsilon(t),Y^\epsilon(t))-b(X^\epsilon(t),\widehat{Y}^{\epsilon,\delta}(t))\Bigr)\diff t\\
&~+\frac{1}{\sqrt{\epsilon}}\Bigl(\sigma(X^\epsilon(t),Y^\epsilon(t))-\sigma(X^\epsilon(t),\widehat{Y}^{\epsilon,\delta}(t))\Bigr)\diff B(t)\\
&~+\frac{1}{\epsilon}\Bigl(b(X^\epsilon(t),\widehat{Y}^{\epsilon,\delta}(t))-b(X^\epsilon(t_{n_\delta(t)}),\widehat{Y}^{\epsilon,\delta}(t))\Bigr)\diff t\\
&~+\frac{1}{\sqrt{\epsilon}}\Bigl(\sigma(X^\epsilon(t),\widehat{Y}^{\epsilon,\delta}(t))-\sigma(X^\epsilon(t_{n_\delta(t)}),\widehat{Y}^{\epsilon,\delta}(t))\Bigr)\diff B(t).
\end{align*}
Applying It\^o's formula and using the Lipschitz continuity properties of the mappings $b$ and $\sigma$ from Assumption~\ref{ass:b-sigma}, one obtains for all $t\in[0,T]$
\begin{align*}
\frac12\frac{\diff \E[\|Y^\epsilon(t)-\widehat{Y}^{\epsilon,\delta}(t)\|^2]}{\diff t}
&=\frac{1}{\epsilon}\E\bigl[\langle b(X^\epsilon(t),Y^\epsilon(t))-b(X^\epsilon(t),\widehat{Y}^{\epsilon,\delta}(t)),Y^\epsilon(t)-\widehat{Y}^{\epsilon,\delta}(t)\rangle\bigr] \\
&~+\frac{1}{2\epsilon}\E\bigl[\big\|\sigma(X^\epsilon(t),Y^\epsilon(t))-\sigma(X^\epsilon(t),\widehat{Y}^{\epsilon,\delta}(t))\big\|^2\bigr]\\
&~+\frac{1}{\epsilon}\E\bigl[\langle b(X^\epsilon(t),\widehat{Y}^{\epsilon,\delta}(t))-b(X^\epsilon(t_{n_\delta(t)}),\widehat{Y}^{\epsilon,\delta}(t)),Y^\epsilon(t)-\widehat{Y}^{\epsilon,\delta}(t)\rangle \bigr]\\
&~+\frac{1}{2\epsilon}\E\bigl[\big\|\sigma(X^\epsilon(t),\widehat{Y}^{\epsilon,\delta}(t))-\sigma(X^\epsilon(t_{n_\delta(t)}),\widehat{Y}^{\epsilon,\delta}(t))\big\|^2\bigr]\\
&\le \frac{1}{\epsilon}\E\bigl[\langle b(X^\epsilon(t),Y^\epsilon(t))-b(X^\epsilon(t),\widehat{Y}^{\epsilon,\delta}(t)),Y^\epsilon(t)-\widehat{Y}^{\epsilon,\delta}(t)\rangle\bigr] \\
&~+\frac{1}{2\epsilon}\E\bigl[\big\|\sigma(X^\epsilon(t),Y^\epsilon(t))-\sigma(X^\epsilon(t),\widehat{Y}^{\epsilon,\delta}(t))\big\|^2\bigr]\\
&~+\frac{C}{\epsilon}\E[\|X^\epsilon(t)-X^\epsilon(t_{n_\delta(t)})\|\|Y^\epsilon(t)-\widehat{Y}^{\epsilon,\delta}(t)\|]\\
&~+\frac{C}{\epsilon}\E[\|X^\epsilon(t)-X^\epsilon(t_{n_\delta(t)})\|^2].
\end{align*}
Then, owing to the condition~\eqref{eq:dissipation} from Assumption~\ref{ass:b-sigma} and using the Cauchy--Schwarz and Young inequalities, one obtains for all $t\in[0,T]$
\begin{align*}
\frac12&\frac{\diff \E[\|Y^\epsilon(t)-\widehat{Y}^{\epsilon,\delta}(t)\|^2]}{\diff t}\le -\frac{\gamma}{\epsilon}\E[\|Y^\epsilon(t)-\widehat{Y}^{\epsilon,\delta}(t)\|^2]\\
&~+\frac{C}{\epsilon}\E[\|X^\epsilon(t)-X^\epsilon(t_{n_\delta(t)})\|\|Y^\epsilon(t)-\widehat{Y}^{\epsilon,\delta}(t)\|]+\frac{C}{\epsilon}\E[\|X^\epsilon(t)-X^\epsilon(t_{n_\delta(t)})\|^2]\\
&\le -\frac{\gamma}{2\epsilon}\E[\|Y^\epsilon(t)-\widehat{Y}^{\epsilon,\delta}(t)\|^2]+\frac{C}{\epsilon}\E[\|X^\epsilon(t)-X^\epsilon(t_{n_\delta(t)})\|^2].
\end{align*}
In addition, owing to the inequality~\eqref{eq:regulXeps} from Proposition~\ref{propo:regulXeps}, one obtains
\[
\frac12\frac{\diff \E[\|Y^\epsilon(t)-\widehat{Y}^{\epsilon,\delta}(t)\|^2]}{\diff t}\le -\frac{\gamma}{2\epsilon}\E[\|Y^\epsilon(t)-\widehat{Y}^{\epsilon,\delta}(t)\|^2]+\frac{C_\beta(T,x_0,y_0)\delta^{2\beta}}{\epsilon},
\]
with the notation $C_\beta(T,x_0,y_0)=C_\beta(T)(1+\|x_0\|+\|y_0\|)^2$ used to simplify the presentation here and below.

Then, applying the Gr\"onwall inequality one obtains, for all $n\in\{0,\ldots,N-1\}$ and all $t\in[t_n,t_{n+1}]$
\begin{equation}\label{eq:auxineq}
\begin{aligned}
\E[\|Y^\epsilon(t)-\widehat{Y}^{\epsilon,\delta}(t)\|^2]&\le e^{-\frac{\gamma(t-t_{n})}{\epsilon}}\E[\|Y^\epsilon(t_n)-\widehat{Y}^{\epsilon,\delta}(t_n)\|^2]\\
&~+\frac{C_\beta(T,x_0,y_0)\delta^{2\beta}}{\epsilon}\int_{t_n}^{t}e^{-\frac{\gamma(t-s)}{\epsilon}}\diff s.
\end{aligned}
\end{equation}
For all $n\in\{0,\ldots,N-1\}$, let $\varrho_n^{\epsilon,\delta}=\E[\|Y^\epsilon(t_n)-\widehat{Y}^{\epsilon,\delta}(t_n)\|^2]$. Letting $t=t_{n+1}$ in the inequality above, one obtains for all $n\in\{0,\ldots,N-1\}$
\begin{align*}
\varrho_{n+1}^{\epsilon,\delta}&\le e^{-\frac{\gamma\delta}{\epsilon}}\varrho_n^{\epsilon,\delta}+\frac{C_\beta(T,x_0,y_0)\delta^{2\beta}}{\epsilon}\int_{t_n}^{t_{n+1}}e^{-\frac{\gamma(t_{n+1}-s)}{\epsilon}}\diff s.
\end{align*}
Note that one has $\varrho_0^{\epsilon,\delta}=0$, therefore a discrete Gr\"onwall inequality argument yields for all $n\in\{0,\ldots,N\}$
\begin{align*}
\E[\|Y^\epsilon(t_n)-\widehat{Y}^{\epsilon,\delta}(t_n)\|^2]=\varrho_n^{\epsilon,\delta}&\le \frac{C_\beta(T,x_0,y_0)\delta^{2\beta}}{\epsilon}\sum_{k=0}^{n-1}e^{-\frac{\gamma(t_n-t_{k+1})}{\epsilon}}\int_{t_k}^{t_{k+1}}e^{-\frac{\gamma(t_{k+1}-s)}{\epsilon}}\diff s\\
&\le \frac{C_\beta(T,x_0,y_0)\delta^{2\beta}}{\epsilon}\sum_{k=0}^{n-1}\int_{t_k}^{t_{k+1}}e^{-\frac{\gamma(t_{n}-s)}{\epsilon}}\diff s\\
&\le \frac{C_\beta(T,x_0,y_0)\delta^{2\beta}}{\epsilon}\int_{0}^{t_{n}}e^{-\frac{\gamma(t_{n}-s)}{\epsilon}}\diff s\\
&\le C_\beta(T,x_0,y_0)\delta^{2\beta}.
\end{align*}
Plugging that upper bound in the inequality~\eqref{eq:auxineq}, for all $t\in[0,T]$, one obtains
\begin{align*}
\E[\|Y^\epsilon(t)-\widehat{Y}^{\epsilon,\delta}(t)\|^2]
&\le e^{-\frac{\gamma(t-t_{n_\delta(t)})}{\epsilon}}\E[\|Y^\epsilon(t_{n_\delta(t)})-\widehat{Y}^{\epsilon,\delta}(t_{n_\delta(t)})\|^2]\\
&~+\frac{C_\beta(T,x_0,y_0)\delta^{2\beta}}{\epsilon}\int_{t_{n_\delta (t)}}^{t}e^{-\frac{\gamma(t-s)}{\epsilon}}\diff s\\
&\le C_\beta(T,x_0,y_0)\delta^{2\beta}\Bigl(e^{-\frac{\gamma(t-t_{n_\delta(t)})}{\epsilon}}+\bigl(1-e^{-\frac{\gamma(t-t_{n_\delta(t)})}{\epsilon}}\bigr)\Bigr)\\
&\le C_\beta(T,x_0,y_0)\delta^{2\beta}.
\end{align*} 
Observe that the upper bound above holds for arbitrary $t\in[0,T]$, and recall that one has $C_\beta(T,x_0,y_0)=C_\beta(T)(1+\|x_0\|+\|y_0\|)^2$. This yields the inequality~\eqref{eq:errorY} and the proof of Lemma~\ref{lem:errorY} is completed.
\end{proof}

It remains to prove Lemma~\ref{lem:errorX1} and Lemma~\ref{lem:errorX2}.

\begin{proof}[Proof of Lemma~\ref{lem:errorX1}]
Owing to~\eqref{eq:system} and~\eqref{eq:aux}, for all $t\in[0,T]$ one has
\begin{align*}
X^\epsilon(t)-\widehat{X}^{\epsilon,\delta}(t)&=\frac{1}{\Gamma(\alpha)}\int_{0}^{t}(t-s)^{\alpha-1}\Bigl[f\left(X^\epsilon(s),Y^\epsilon(s)\right)-f\left(X^{\epsilon}(t_{n_\delta(s)}),\widehat{Y}^{\epsilon,\delta}(s)\right)\Bigr] \diff s\\
&=\frac{1}{\Gamma(\alpha)}\int_{0}^{t}(t-s)^{\alpha-1}\Bigl[f\left(X^\epsilon(s),Y^\epsilon(s)\right)-f\left(X^{\epsilon}(t_{n_\delta(s)}),Y^\epsilon(s)\right)\Bigr] ds\\
&+\frac{1}{\Gamma(\alpha)}\int_{0}^{t}(t-s)^{\alpha-1}\Bigl[f\left(X^{\epsilon}(t_{n_\delta(s)}),Y^\epsilon(s)\right)-f\left(X^{\epsilon}(t_{n_\delta(s)}),\widehat{Y}^{\epsilon,\delta}(s)\right)\Bigr] \diff s.
\end{align*}
Since the mapping $f$ is globally Lipschitz continuous (see Assumption~\ref{ass:F}), applying the Minkowski inequality one obtains for all $t\in[0,T]$
\begin{align*}
\bigl(\E[\|X^\epsilon(t)-\hat{X}^{\epsilon,\delta}(t)\|^2]\bigr)^{\frac12}&\le C\int_{0}^{t}(t-s)^{\alpha-1}\bigl(\E[\|X^\epsilon(s)-X^{\epsilon}(t_{n_\delta(s)})\|^2]\bigr)^{\frac12} \diff s\\
&+C\int_{0}^{t}(t-s)^{\alpha-1}\bigl(\E[\|Y^\epsilon(s)-\widehat{Y}^{\epsilon,\delta}(s)\|^2]\bigr)^{\frac12}\diff s.
\end{align*}
Applying the inequality~\eqref{eq:regulXeps} from Proposition~\ref{propo:regulXeps} and the inequality~\eqref{eq:errorY} from Lemma~\ref{lem:errorY}, and noting that
\[
\underset{t\in[0,T]}\sup~\int_{0}^{t}(t-s)^{\alpha-1}\diff s=\frac{T^\alpha}{\alpha},
\]
one obtains
\[
\bigl(\E[\|X^\epsilon(t)-\hat{X}^{\epsilon,\delta}(t)\|^2]\bigr)^{\frac12}\le C_\beta(T)\bigl(1+\|x_0\|+\|y_0\|\bigr)\delta^{\beta}.
\]
This yields the inequality~\eqref{eq:errorX1} and concludes the proof of Lemma~\ref{lem:errorX1}.
\end{proof}

\begin{proof}[Proof of Lemma~\ref{lem:errorX2}]
Owing to~\eqref{eq:aux} and to~\eqref{eq:averaged}, for all $t\in[0,T]$, one has
\begin{align}
\widehat{X}^{\epsilon,\delta}(t)-\overline{X}(t)&=\frac{1}{\Gamma(\alpha)}\int_{0}^{t}(t-s)^{\alpha-1}\Bigl[f\bigl({X}^{\epsilon}(t_{n_\delta(s)}),\widehat{Y}^{\epsilon,\delta}(s)\bigr)-\overline{f}(\overline{X}(s))\Bigr] \diff s \nonumber\\
&=r_1^{\epsilon,\delta}(t)+r_2^{\epsilon,\delta}(t)+r_3^{\epsilon,\delta}(t)+r_4^{\epsilon,\delta}(t),\label{eq:decomperrorX2}
\end{align}
with error terms defined by
\begin{align*}
r_1^{\epsilon,\delta}(t)&=\frac{1}{\Gamma(\alpha)}\int_{0}^{t}(t-s)^{\alpha-1}\bigl[f\bigl({X}^{\epsilon}(t_{n_{\delta}(s)}),\widehat{Y}^{\epsilon,\delta}(s)\bigr)-\overline{f}({X}^{\epsilon}(t_{n_\delta(s)}))\bigr] \diff s\\
r_2^{\epsilon,\delta}(t)&=\frac{1}{\Gamma(\alpha)}\int_{0}^{t}(t-s)^{\alpha-1}\bigl[\overline{f}({X}^{\epsilon}(t_{n_\delta(s)}))-\overline{f}(\widehat{X}^{\epsilon,\delta}(t_{n_\delta(s)}))\bigr] \diff s\\
r_3^{\epsilon,\delta}(t)&=\frac{1}{\Gamma(\alpha)}\int_{0}^{t}(t-s)^{\alpha-1}\bigl[\overline{f}(\widehat{X}^{\epsilon,\delta}(t_{n_\delta(s)}))-\overline{f}(\overline{X}(t_{n_\delta(s)}))\bigr] \diff s\\
r_4^{\epsilon,\delta}(t)&=\frac{1}{\Gamma(\alpha)}\int_{0}^{t}(t-s)^{\alpha-1}\bigl[\overline{f}(\overline{X}(t_{n_\delta(s)}))-\overline{f}(\overline{X}(s))\bigr] \diff s.
\end{align*}

To simplify the presentation, the notation $C_\beta(T,x_0,y_0)=C_\beta(T)\bigl(1+\|x_0\|+\|y_0\|)^2$ is employed below.

$\bullet$ Treatment of the error term $r_1^{\epsilon,\delta}(t)$.

Define the auxiliary mapping $\Delta f$ as follows: set
\begin{equation}\label{eq:Deltaf}
\Delta f(x,y)=f(x,y)-\overline{f}(x),\quad \forall~x\in\R^p,y\in\R^q.
\end{equation}

For all $t\in[0,T]$, one has the decomposition
\begin{align*}
r_1^{\epsilon,\delta}(t)&=\frac{1}{\Gamma(\alpha)}\int_{t_{n_\delta(t)}}^{t}(t-s)^{\alpha-1}\Delta f\bigl({X}^{\epsilon}(t_{n_{\delta}(t)}),\widehat{Y}^{\epsilon,\delta}(s)\bigr) \diff s\\
&~+\sum_{k=0}^{n_\delta(t)-1}\frac{1}{\Gamma(\alpha)}\int_{t_k}^{t_{k+1}}(t-s)^{\alpha-1}\Delta f\bigl({X}^{\epsilon}(t_k),\widehat{Y}^{\epsilon,\delta}(s)\bigr)\diff s.
\end{align*}

Dealing with the first part of the error term $r_1^{\epsilon,\delta}(t)$ is straightforward. Applying the Minkowski inequality, and recalling that the mappings $f$ and $\overline{f}$ have at most linear growth owing to~\eqref{eq:lineargrowth} and~\eqref{eq:lineargrowth2}, one obtains
\begin{align*}
\bigl(\E[&\|\int_{t_{n_\delta(s)}}^{t}(t-s)^{\alpha-1}\Delta f\bigl({X}^{\epsilon}(t_{n_{\delta}(t)}),\widehat{Y}^{\epsilon,\delta}(s)\bigr) \diff s\|^2]\bigr)^{\frac12}\\
&\le \int_{t_{n_\delta(s)}}^{t}(t-s)^{\alpha-1}\bigl(\E[\|\Delta f\bigl({X}^{\epsilon}(t_{n_{\delta}(t)}),\widehat{Y}^{\epsilon,\delta}(s)\bigr)\|^2]\bigr)^{\frac12} \diff s\\
&\le C\int_{t_{n_\delta(s)}}^{t}(t-s)^{\alpha-1}\bigl(1+\E[\|{X}^{\epsilon}(t_{n_{\delta}(t)})\|^2] +\E[\|\widehat{Y}^{\epsilon,\delta}(s)\|^2]\bigr)^{\frac12} \diff s.
\end{align*}
Therefore, using the moment bounds~\eqref{eq:boundXepsYeps} from Proposition~\ref{propo:boundXepsYeps} for the process $X^\epsilon$  and the moment bounds~\eqref{eq:boundYaux} from Remark~\ref{rem:boundYaux} for the process $\widehat{Y}^{\epsilon,\delta}$, which are uniform with respect to the parameters $\epsilon,\delta\in(0,1)$, one obtains the upper bounds
\begin{align*}
\bigl(\E[&\|\int_{t_{n_\delta(s)}}^{t}(t-s)^{\alpha-1}\Delta f\bigl({X}^{\epsilon}(t_{n_{\delta}(t)}),\widehat{Y}^{\epsilon,\delta}(s)\bigr)\diff s\|^2]\bigr)^{\frac12}\\
&\le C(T)\bigl(1+\|x_0\|+\|y_0\|\bigr)\int_{t_{n_\delta(s)}}^{t}(t-s)^{\alpha-1} \diff s\\
&\le C(T)\bigl(1+\|x_0\|+\|y_0\|\bigr)\frac{(t-t_{n_\delta}(t))^\alpha}{\alpha}\\
&\le C_\alpha(T)\bigl(1+\|x_0\|+\|y_0\|\bigr)\delta^\alpha.
\end{align*}

Dealing with the second part of the error term $r_1^{\epsilon,\delta}(t)$ requires more attention. For all $k\in\{0,\ldots,n_{\delta}(t)-1\}$, set
\[
r_{1,k}^{\epsilon,\delta}(t)=\frac{1}{\Gamma(\alpha)}\int_{t_k}^{t_{k+1}}(t-s)^{\alpha-1}\Delta f\bigl({X}^{\epsilon}(t_k),\widehat{Y}^{\epsilon,\delta}(s)\bigr) \diff s.
\]
Then one has
\[
\E\Bigl[\Big\|\sum_{k=0}^{n_{\delta}(t)-1}r_{1,k}^{\epsilon,\delta}(t)\Big\|^2\Bigr]=\sum_{k=0}^{n_{\delta}(t)-1}\E\bigl[\big\|r_{1,k}^{\epsilon,\delta}(t)\big\|^2\bigr]+2\sum_{0\le k<\ell\le n_{\delta}(t)-1}\E\bigl[\langle r_{1,k}^{\epsilon,\delta}(t),r_{1,\ell}^{\epsilon,\delta}(t)\rangle\bigr].
\]
On the one hand, let $k\in\{0,\ldots,n_{\delta}(t)-1\}$, then one has
\begin{align*}
\E\bigl[&\big\|r_{1,k}^{\epsilon,\delta}(t)\big\|^2\bigr]\\
&=\frac{1}{\Gamma(\alpha)^2}\int_{t_k}^{t_{k+1}}\int_{t_k}^{t_{k+1}}(t-s_1)^{\alpha-1}(t-s_2)^{\alpha-1}\E\bigl[\langle \Delta f\bigl({X}^{\epsilon}(t_k),\widehat{Y}^{\epsilon,\delta}(s_1)\bigr),\Delta f\bigl({X}^{\epsilon}(t_k),\widehat{Y}^{\epsilon,\delta}(s_2)\bigr)\rangle\bigr] \diff s_1 \diff s_2\\
& =\frac{2}{\Gamma(\alpha)^2}\int_{t_k}^{t_{k+1}}\int_{s_1}^{t_{k+1}}(t-s_1)^{\alpha-1}(t-s_2)^{\alpha-1}\E\bigl[\langle \Delta f\bigl({X}^{\epsilon}(t_k),\widehat{Y}^{\epsilon,\delta}(s_1)\bigr),\Delta f\bigl({X}^{\epsilon}(t_k),\widehat{Y}^{\epsilon,\delta}(s_2)\bigr)\rangle\bigr] \diff s_1 \diff s_2.
\end{align*}
Given $s_2\ge s_1\ge t_k$, the random variables ${X}^{\epsilon}(t_k)$ and $\widehat{Y}^{\epsilon,\delta}(s_1)$ are $\mathcal{F}_{s_1}$-measurable. As a result, considering conditional expectation one has
\begin{align*}
\E\bigl[\langle \Delta f\bigl({X}^{\epsilon}(t_k),\widehat{Y}^{\epsilon,\delta}(s_1)\bigr)&,\Delta f\bigl({X}^{\epsilon}(t_k),\widehat{Y}^{\epsilon,\delta}(s_2)\bigr)\rangle\bigr]\\
&=\E\bigl[\langle \Delta f\bigl({X}^{\epsilon}(t_k),\widehat{Y}^{\epsilon,\delta}(s_1)\bigr),\E[\Delta f\bigl({X}^{\epsilon}(t_k),\widehat{Y}^{\epsilon,\delta}(s_2)\bigr)|\mathcal{F}_{s_1}]\rangle\bigr],
\end{align*}
and by the Markov property one has
\[
\E[\Delta f\bigl({X}^{\epsilon}(t_k),\widehat{Y}^{\epsilon,\delta}(s_2)\bigr)|\mathcal{F}_{s_1}]=\bigl(P_{\frac{s_2-s_1}{\epsilon}}^{{X}^{\epsilon}(t_k)}\Delta f({X}^{\epsilon}(t_k),\cdot)\bigr)\bigl(\widehat{Y}^{\epsilon,\delta}(s_1)\bigr),
\]
where the semigroup $\bigl(P_t^x\bigr)_{t\ge 0}$ with frozen slow component $x\in\R^p$ is given by~\eqref{eq:semigroupfrozenSDE}. Note that by construction one has $\int_{\R^q} \Delta f(x,y) d\mu^x(y)=0$ for all $x\in\R^p$. Moreover, the mapping $f$ is globally Lipschitz continuous, owing to Assumption~\ref{ass:F}. Therefore applying the upper bound~\eqref{eq:ergosemigroupfrozenSDE} one obtains the upper bound
\[
\big|\E[\Delta f\bigl({X}^{\epsilon}(t_k),\widehat{Y}^{\epsilon,\delta}(s_2)\bigr)|\mathcal{F}_{s_1}]\big|\le Ce^{-\frac{s_2-s_1}{\epsilon}}\bigl(1+\|{X}^{\epsilon}(t_k)\|+\|\widehat{Y}^{\epsilon,\delta}(s_1)\|\bigr).
\]
Since $f$ and $\overline{f}$ have at most linear growth (see~\eqref{eq:lineargrowth} and~\eqref{eq:lineargrowth2}), using the moment bounds~\eqref{eq:boundXepsYeps} from Proposition~\ref{propo:boundXepsYeps} for $X^\epsilon$ and the moment bounds~\eqref{eq:boundYaux} from Remark~\ref{rem:boundYaux} for $\widehat{Y}^{\epsilon,\delta}$, one obtains
\begin{align*}
\Big|\E\bigl[\langle \Delta f\bigl({X}^{\epsilon}(t_k),\widehat{Y}^{\epsilon,\delta}(s_1)\bigr)&,\Delta f\bigl({X}^{\epsilon}(t_k),\widehat{Y}^{\epsilon,\delta}(s_2)\bigr)\rangle\bigr]\Big|\\
&\le Ce^{-\frac{s_2-s_1}{\epsilon}}\E\bigl[\bigl(1+\|{X}^{\epsilon}(t_k)\|+\|\widehat{Y}^{\epsilon,\delta}(s_1)\|\bigr)^2\bigr]\\
&\le C(T)e^{-\frac{s_2-s_1}{\epsilon}}\bigl(1+\|x_0\|+\|y_0\|\bigr)^2.
\end{align*}
Therefore, for all $t\in[0,T]$ one obtains the upper bound
\[
\sum_{k=0}^{n_{\delta}(t)-1}\E\bigl[\big\|r_{1,k}^{\epsilon,\delta}(t)\big\|^2\bigr]\le \rho_1(t,\epsilon,\delta),
\]
where for all $t\in[0,T]$ the error term $\rho_1(t,\epsilon,\delta)$ is given by
\begin{equation}\label{eq:rho1}
\rho_1(t,\epsilon,\delta)=C(T)\sum_{k=0}^{n_{\delta}(t)-1}\int_{t_k}^{t_{k+1}}\int_{s_1}^{t_{k+1}}(t-s_1)^{\alpha-1}(t-s_2)^{\alpha-1} e^{-\frac{s_2-s_1}{\epsilon}} \diff s_1 \diff s_2 \bigl(1+\|x_0\|+\|y_0\|\bigr)^2.
\end{equation}

On the other hand, let $k,\ell\in\{0,\ldots,n_{\delta}(t)-1\}$ such that $k<\ell$, then one has
\begin{align*}
\E\bigl[&\langle r_{1,k}^{\epsilon,\delta}(t),r_{1,\ell}^{\epsilon,\delta}(t)\rangle\bigr]\\
&=\frac{1}{\Gamma(\alpha)^2}\int_{t_k}^{t_{k+1}}\int_{t_\ell}^{t_{\ell+1}}(t-s_1)^{\alpha-1}(t-s_2)^{\alpha-1}\E\bigl[\langle \Delta f\bigl({X}^{\epsilon}(t_k),\widehat{Y}^{\epsilon,\delta}(s_1)\bigr),\Delta f\bigl({X}^{\epsilon}(t_\ell),\widehat{Y}^{\epsilon,\delta}(s_2)\bigr)\rangle\bigr] \diff s_1 \diff s_2.
\end{align*}
Given $s_2\ge t_\ell \ge t_{k+1} \ge s_1\ge t_k$, the random variables ${X}^{\epsilon}(t_k)$, ${X}^{\epsilon}(t_\ell)$ and $\widehat{Y}^{\epsilon,\delta}(s_1)$ are $\mathcal{F}_{t_{\ell}}$-measurable. As a result, considering conditional expectation one has
\begin{align*}
\E\bigl[\langle \Delta f\bigl({X}^{\epsilon}(t_k)&,\widehat{Y}^{\epsilon,\delta}(s_1)\bigr),\Delta f\bigl({X}^{\epsilon}(t_\ell),\widehat{Y}^{\epsilon,\delta}(s_2)\bigr)\rangle\bigr]\\
&=\E\bigl[\langle \Delta f\bigl({X}^{\epsilon}(t_k),\widehat{Y}^{\epsilon,\delta}(s_1)\bigr),\E[\Delta f\bigl({X}^{\epsilon}(t_\ell),\widehat{Y}^{\epsilon,\delta}(s_2)\bigr)|\mathcal{F}_{t_\ell}]\rangle\bigr],
\end{align*}
and by the Markov property one has
\[
\E[\Delta f\bigl({X}^{\epsilon}(t_\ell),\widehat{Y}^{\epsilon,\delta}(s_2)\bigr)|\mathcal{F}_{t_\ell}]=\bigl(P_{\frac{s_2-t_\ell}{\epsilon}}^{{X}^{\epsilon}(t_\ell)}\Delta f({X}^{\epsilon}(t_\ell),\cdot)\bigr)\bigl(\widehat{Y}^{\epsilon,\delta}(t_\ell)\bigr),
\]
where the semigroup $\bigl(P_t^x\bigr)_{t\ge 0}$ with frozen slow component $x\in\R^p$ is given by~\eqref{eq:semigroupfrozenSDE}. Note that by construction one has $\int_{\R^q} \Delta f(x,y) d\mu^x(y)=0$ for all $x\in\R^p$. Moreover, the mapping $f$ is globally Lipschitz continuous, owing to Assumption~\ref{ass:F}. Therefore applying the upper bound~\eqref{eq:ergosemigroupfrozenSDE} one obtains the upper bound
\[
\big|\E[\Delta f\bigl({X}^{\epsilon}(t_\ell),\widehat{Y}^{\epsilon,\delta}(s_2)\bigr)|\mathcal{F}_{t_\ell}]\big|\le Ce^{-\frac{s_2-t_\ell}{\epsilon}}\bigl(1+\|{X}^{\epsilon}(t_\ell)\|+\|\widehat{Y}^{\epsilon,\delta}(t_\ell)\|\bigr).
\]
Since $f$ and $\overline{f}$ have at most linear growth (see~\eqref{eq:lineargrowth} and~\eqref{eq:lineargrowth2}), using the moment bounds~\eqref{eq:boundXepsYeps} from Proposition~\ref{propo:boundXepsYeps} for $X^\epsilon$ and the moment bounds~\eqref{eq:boundYaux} from Remark~\ref{rem:boundYaux} for $\widehat{Y}^{\epsilon,\delta}$, one obtains
\begin{align*}
\Big|\E\bigl[\langle \Delta f\bigl({X}^{\epsilon}(t_k)&,\widehat{Y}^{\epsilon,\delta}(s_1)\bigr),\Delta f\bigl({X}^{\epsilon}(t_\ell),\widehat{Y}^{\epsilon,\delta}(s_2)\bigr)\rangle\bigr]\Big|\\
&\le Ce^{-\frac{s_2-t_\ell}{\epsilon}}\E\bigl[\bigl(1+\|{X}^{\epsilon}(t_k)\|+\|\widehat{Y}^{\epsilon,\delta}(s_1)\|\bigr)\bigl(1+\|{X}^{\epsilon}(t_\ell)\|+\|\widehat{Y}^{\epsilon,\delta}(t_\ell)\|\bigr)\bigr]\\
&\le C(T)e^{-\frac{s_2-t_\ell}{\epsilon}}\bigl(1+\|x_0\|+\|y_0\|\bigr)^2.
\end{align*}
Therefore, for all $t\in[0,T]$ one obtains the upper bound
\[
2\sum_{0\le k<\ell\le n_{\delta}(t)-1}\E\bigl[\langle r_{1,k}^{\epsilon,\delta}(t),r_{1,\ell}^{\epsilon,\delta}(t)\rangle\bigr]\le \rho_2(t,\epsilon,\delta),
\]
where for all $t\in[0,T]$ the error term $\rho_2(t,\epsilon,\delta)$ is given by
\begin{equation}\label{eq:rho2}
\rho_2(t,\epsilon,\delta)=C(T)\sum_{0\le k<\ell\le n_{\delta}(t)-1}\int_{t_k}^{t_{k+1}}\int_{t_\ell}^{t_{\ell+1}}(t-s_1)^{\alpha-1}(t-s_2)^{\alpha-1} e^{-\frac{s_2-t_\ell}{\epsilon}}\diff s_1 \diff s_2 \bigl(1+\|x_0\|+\|y_0\|\bigr)^2.
\end{equation}
Let $\rho(t,\epsilon,\delta)=\rho_1(t,\epsilon,\delta)+\rho_2(t,\epsilon,\delta)$ for all $t\in[0,T]$.

Gathering the upper bounds, one obtains for all $t\in[0,T]$
\begin{equation}\label{eq:r1epsdelta}
\E[\|r_1^{\epsilon,\delta}(t)\|^2]\le C_\alpha(T,x_0,y_0)\delta^{2\alpha}+C(T,x_0,y_0)\rho(t,\epsilon,\delta).
\end{equation}

$\bullet$ Treatment of the error term $r_2^{\epsilon,\delta}(t)$.

The mapping $\overline{f}$ is globally Lipschitz continuous, therefore applying the Cauchy--Schwarz inequality and the inequality~\eqref{eq:errorX1} from Lemma~\ref{lem:errorX1}, for all $t\in[0,T]$ one has
\[
\E[\|r_2^{\epsilon,\delta}(t)\|^2]\le C_\alpha(T)\int_{0}^{t}(t-s)^{\alpha-1}\E\bigl[\|X^\epsilon(t_{n_\delta(s)})-\widehat{X}^{\epsilon,\delta}(t_{n_\delta(s)})\|^2\bigr]\diff s,
\]
and thus one obtains for all $t\in[0,T]$
\begin{equation}\label{eq:r2epsdelta}
\E[\|r_2^{\epsilon,\delta}(t)\|^2]\le C_\beta(T,x_0,y_0)\delta^{2\beta}.
\end{equation}

$\bullet$ Treatment of the error term $r_3^{\epsilon,\delta}(t)$.

The mapping $\overline{f}$ is globally Lipschitz continuous, therefore applying the Cauchy--Schwarz inequality,
one obtains for all $t\in[0,T]$
\begin{equation}\label{eq:r3epsdelta}
\E[\|r_3^{\epsilon,\delta}(t)\|^2]\le C_\beta(T)\int_{0}^{t}(t-s)^{\alpha-1}\E\bigl[\|\widehat{X}^{\epsilon,\delta}(t_{n_\delta(s)})-\overline{X}(t_{n_\delta(s)})\|^2\bigr] \diff s.
\end{equation}

$\bullet$ Treatment of the error term $r_4^{\epsilon,\delta}(t)$.

The mapping $\overline{f}$ is globally Lipschitz continuous, therefore applying the Cauchy--Schwarz inequality and the inequality~\eqref{eq:regulXbar} from Proposition~\ref{propo:regulXbar}, for all $t\in[0,T]$ one has
\[
\E[\|r_4^{\epsilon,\delta}(t)\|^2]\le C_\alpha(T)\int_{0}^{t}(t-s)^{\alpha-1}\E\bigl[\|\overline{X}(t_{n_\delta(s)})-\overline{X}(s)\|^2\bigr]\diff s,
\]
and thus one obtains for all $t\in[0,T]$
\begin{equation}\label{eq:r4epsdelta}
\E[\|r_4^{\epsilon,\delta}(t)\|^2]\le C_\beta(T,x_0,y_0)\delta^{2\beta}.
\end{equation}

$\bullet$ Conclusion.

Recalling the decomposition~\eqref{eq:decomperrorX2} of the error, gathering the upper bounds~\eqref{eq:r1epsdelta},~\eqref{eq:r2epsdelta},~\eqref{eq:r3epsdelta} and~\eqref{eq:r4epsdelta} obtained above, for all $t\in[0,T]$ one has
\begin{align*}
\E\bigl[\|\widehat{X}^{\epsilon,\delta}(t)-\overline{X}(t)\|^2\bigr]&\le C_\beta(T,x_0,y_0)\delta^{2\beta}+C(T,x_0,y_0)\rho(t,\epsilon,\delta)\\
&+C_\beta(T)\int_{0}^{t}(t-s)^{\alpha-1}\E\bigl[\|\widehat{X}^{\epsilon,\delta}(t_{n_\delta(s)})-\overline{X}(t_{n_\delta(s)})\|^2\bigr]\diff s.
\end{align*}
For any fixed auxiliary parameter $\delta$ and for all $t\in[0,T]$, as a consequence of the dominated convergence theorem, from~\eqref{eq:rho1} and~\eqref{eq:rho2} one has
\[
\underset{\epsilon\to 0}\lim~\rho(t,\epsilon,\delta)=0.
\]
For all $\delta\in(0,1)$ and $t\in[0,T]$, define
\begin{equation}\label{eq:erraux}
\overline{\rm err}^{\delta}(t)=\underset{\epsilon\to 0}\limsup~\E[\|\widehat{X}^{\epsilon,\delta}(t)-\overline{X}(t)\|^2].
\end{equation}
The upper bound above then gives
\[
\overline{\rm err}^{\delta}(t)\le C_\beta(T,x_0,y_0)\delta^{2\beta}+C_\beta(T)\int_{0}^{t}(t-s)^{\alpha-1}\overline{\rm err}^{\delta}(t_{n_\delta(s)})\diff s.
\]
Applying a version of the Gr\"onwall inequality shows that one has
\begin{equation}
\underset{t\in[0,T]}\sup~\overline{\rm err}^{\delta}(t)\le C_\beta(T,x_0,y_0)\delta^{2\beta}=C_\beta(T)\bigl(1+\|x_0\|+\|y_0\|)^2 \delta^{2\beta}.
\end{equation}
This yields the inequality~\eqref{eq:errorX2} and concludes the proof of Lemma~\ref{lem:errorX2}.
\end{proof}

\subsection{Proof of Theorem~\ref{theo:1}}

\begin{proof}[Proof of Theorem~\ref{theo:1}]
Recall the decomposition of the error given by~\eqref{eq:decomperror}. Combining the results of Lemma~\ref{lem:errorX1} and~\ref{lem:errorX2} yields
\[
\underset{t\in[0,T]}\sup~\underset{\epsilon\to 0}\limsup~\E[\|{X}^{\epsilon}(t)-\overline{X}(t)\|^2]\le C_\beta(T)(1+\|x_0\|+\|y_0\|)^2\delta^{2\beta}
\]
where the auxiliary parameter $\delta\in(0,1)$ is arbitrary, therefore letting $\delta\to 0$ gives
\[
\underset{t\in[0,T]}\sup~\underset{\epsilon\to 0}\limsup~\E[\|{X}^{\epsilon}(t)-\overline{X}(t)\|^2]=0.
\]
Thus for all $t\in[0,T]$ one obtains the convergence
\begin{equation}\label{eq:cv}
\underset{\epsilon\to 0}\lim~\E[\|{X}^{\epsilon}(t)-\overline{X}(t)\|^2]=0.
\end{equation}
Next, note that for all $t\in[0,T]$, one has
\begin{align*}
\bigl(\E[\|{X}^{\epsilon}(t)-\overline{X}(t)\|^2]\bigr)^{\frac12}&\le \bigl(\E[\|{X}^{\epsilon}(t)-{X}^{\epsilon}(t_{n_\delta(t)})\|^2]\bigr)^{\frac12}+\bigl(\E[\|\overline{X}(t_{n_\delta(t)})-\overline{X}(t)\|^2]\bigr)^{\frac12}\\
&+\bigl(\E[\|{X}^{\epsilon}(t_{n_\delta(t)})-\overline{X}(t_{n_\delta(t)})\|^2]\bigr)^{\frac12}.
\end{align*}
Let $\beta\in(0,\alpha)$. Owing to the bounds~\eqref{eq:regulXeps} and~\eqref{eq:regulXbar} from Propositions~\ref{propo:regulXeps} and~\ref{propo:regulXbar}, one has for all $t\in[0,T]$
\[
\bigl(\E[\|{X}^{\epsilon}(t)-{X}^{\epsilon}(t_{n_\delta(t)})\|^2]\bigr)^{\frac12}+\bigl(\E[\|\overline{X}(t_{n_\delta(t)})-\overline{X}(t)\|^2]\bigr)^{\frac12} \le C_\beta(T,x_0,y_0)\delta^{\beta}.
\]
As a consequence, one has
\begin{align*}
\underset{t\in[0,T]}\sup~\bigl(\E[\|{X}^{\epsilon}(t)-\overline{X}(t)\|^2]\bigr)^{\frac12}&\le C_\beta(T,x_0,y_0)\delta^{\beta}+\underset{n=0,\ldots,N}\sup~\bigl(\E[\|{X}^{\epsilon}(t_{n})-\overline{X}(t_{n})\|^2]\bigr)^{\frac12}\\
&\le C_\beta(T,x_0,y_0)\delta^{\beta}+\sum_{n=0}^{N}\bigl(\E[\|{X}^{\epsilon}(t_{n})-\overline{X}(t_{n})\|^2]\bigr)^{\frac12}.
\end{align*}
Owing to the convergence result~\eqref{eq:cv} above applied with $t\in\{t_n;~n=0,\ldots,N\}$, one obtains
\[
\underset{\epsilon\to 0}\limsup~\underset{t\in[0,T]}\sup~\bigl(\E[\|{X}^{\epsilon}(t)-\overline{X}(t)\|^2]\bigr)^{\frac12}\le C_\beta(T,x_0,y_0)\delta^{\beta}.
\]
Since the left-hand side of the upper bound above is independent of the auxiliary parameter $\delta$, letting $\delta\to 0$ yields
\[
\underset{\epsilon\to 0}\limsup~\underset{t\in[0,T]}\sup~\E[\|{X}^{\epsilon}(t)-\overline{X}(t)\|^2]=0.
\]
This concludes the proof of Theorem~\ref{theo:1}.
\end{proof}

\subsection{Proof of Theorem~\ref{theo:2}}

In this section, it is assumed that $b$ and $\sigma$ are independent of the slow component $x$, i.e. one has $b(x,y)=b(y)$ and $\sigma(x,y)=\sigma(y)$ for all $x\in\R^p$ and $y\in\R^q$, where for simplicity the same notation is used for mappings $b$ and $\sigma$ defined on $\R^q$. As a result, the fast process $\bigl(Y^\epsilon(t)\bigr)_{t\ge 0}$ is related to the stochastic differential equation
\[
\diff Y(t)=b(Y(t))\diff t+\sigma(Y(t))\diff B(t),\quad t\ge 0.
\]
which replaces the equation~\eqref{eq:frozenSDE} with frozen slow component. The invariant distribution is denoted by $\mu$. Instead of~\eqref{eq:semigroupfrozenSDE}, the associated semigroup is denoted by $\bigl(P_t\bigr)_{t\ge 0}$: for all $t\ge 0$, one has
\[
P_t\phi(y)=\E[\phi(Y(t))|Y(0)=y],\quad \forall~y\in\R^q,
\]
if $\phi:\R^q\to\R$ is a bounded and continuous mapping. Instead of~\eqref{eq:ergosemigroupfrozenSDE}, if $\phi$ is a Lipschitz continuous mapping, one has
\[
\big|P_t\phi(y)-\int_{\R^q}\phi(y) \diff\mu(y)\big|\le C{\rm Lip}(\phi)e^{-\gamma t}(1+\|y\|),\quad \forall~y\in\R^q,t\ge 0.
\]

\begin{proof}[Proof of Theorem~\ref{theo:2}]

Given~\eqref{eq:system} and~\eqref{eq:averaged}, the error is decomposed as follows: for all $t\in[0,T]$, one has
\begin{align*}
X^\epsilon(t)-\overline{X}(t)&=\frac{1}{\Gamma(\alpha)}\int_{0}^{t}(t-s)^{\alpha-1}\bigl[f(X^\epsilon(s),Y^\epsilon(s))-f(\overline{X}(s),Y^\epsilon(s))\bigr]\diff s\\
&+\frac{1}{\Gamma(\alpha)}\int_{0}^{t}(t-s)^{\alpha-1}\bigl[f(\overline{X}(s),Y^\epsilon(s))-\overline{f}(\overline{X}(s))\bigr]\diff s.
\end{align*}
Owing to Assumption~\ref{ass:F}, $f$ is globally Lipschitz continuous. Therefore applying the Cauchy--Schwarz inequality and using the auxiliary mapping $\Delta f$ defined by~\eqref{eq:Deltaf}, there exists $C_\alpha(T)\in(0,\infty)$ such that for all $t\in[0,T]$ one has
\begin{equation}\label{eq:boundproof2}
\E[\|X^\epsilon(t)-\overline{X}(t)\|^2]\le C_\alpha(T)\int_{0}^{t}(t-s)^{\alpha-1}\E[\|X^\epsilon(s)-\overline{X}(s)\|^2]\diff s+{\rm err}(t),
\end{equation}
where for all $t\in[0,T]$ one has
\begin{align*}
{\rm err}(t)&=\E\Bigl[\|\frac{1}{\Gamma(\alpha)}\int_{0}^{t}(t-s)^{\alpha-1}\bigl[f(\overline{X}(s),Y^\epsilon(s))-\overline{f}(\overline{X}(s))\bigr]\diff s\|^2\Bigr]\\
&=\frac{2}{\Gamma(\alpha)^2}\int_{0}^{t}\int_{s_1}^{t}(t-s_1)^{\alpha-1}(t-s_2)^{\alpha-1}\E\bigl[\langle \Delta f(\overline{X}(s_1),Y^\epsilon(s_1)),\Delta f(\overline{X}(s_2),Y^\epsilon(s_2))\rangle\bigr]\diff s_2 \diff s_1.
\end{align*}
Note that for all $s_2\ge s_1\ge 0$, the quantities $\overline{X}(s_1)$ and $\overline{X}(s_2)$ are deterministic. Moreover, the random variable $Y^\epsilon(s_1)$ is $\mathcal{F}_{s_1}$-measurable. Using the properties of the conditional expectation, one then obtains
\begin{align*}
\E\bigl[\langle \Delta f(\overline{X}(s_1),Y^\epsilon(s_1))&,\Delta f(\overline{X}(s_2),Y^\epsilon(s_2))\rangle\bigr]\\
&=\E\bigl[\langle \Delta f(\overline{X}(s_1),Y^\epsilon(s_1)),\E\bigl[\Delta f(\overline{X}(s_2),Y^\epsilon(s_2))|\mathcal{F}_{s_1}\bigr]\rangle\bigr].
\end{align*}
Using the semigroup $\bigl(P_t\bigr)_{t\ge 0}$ introduced above, applying the Markov property one obtains
\[
\E\bigl[\Delta f(\overline{X}(s_2),Y^\epsilon(s_2))|\mathcal{F}_{s_1}\bigr]=P_{\frac{s_2-s_1}{\epsilon}}\phi(\overline{X}(s_2),\cdot)(Y^\epsilon(s_1)).
\]
Since $\int_{\R^q}\Delta f(x,y)\diff y=0$ and since $y\in\R^q\mapsto \Delta f(x,y)$ is globally Lipschitz continuous, uniformly with respect to $x\in\R^p$ (by Assumption~\ref{ass:F}), applying the inequality above yields
\[
\big|P_{\frac{s_2-s_1}{\epsilon}}\phi(\overline{X}(s_2),\cdot)(Y^\epsilon(s_1))\big|\le C(1+\|Y^\epsilon(s_1)\|)e^{-\frac{\gamma(s_2-s_1)}{\epsilon}}.
\]
Using the upper bound
\[
\|\Delta f(x,y)\|\le C\bigl(1+\|x\|+\|y\|\bigr),\quad \forall~x\in\R^p,y\in\R^q,
\]
and the moment bounds~\eqref{eq:boundXepsYeps} from Proposition~\ref{propo:boundXbar} and the bounds~\eqref{eq:boundXbar} from Proposition~\ref{propo:boundXbar}, one then obtains for all $s_2\ge s_1\ge 0$
\[
\big|\E\bigl[\langle \Delta f(\overline{X}(s_1),Y^\epsilon(s_1)),\Delta f(\overline{X}(s_2),Y^\epsilon(s_2))\rangle\bigr]\big|\le C(T)\bigl(1+\|x_0\|^2+\|y_0\|^2\bigr)e^{-\frac{s_2-s_1}{\epsilon}}.
\]
As a consequence, for all $t\in[0,T]$ one has
\[
{\rm err}(t)\le C_\alpha(T)\bigl(1+\|x_0\|^2+\|y_0\|^2\bigr)\int_{0}^{t}\int_{s_1}^{t}(t-s_1)^{\alpha-1}(t-s_2)^{\alpha-1}e^{-\frac{\gamma(s_2-s_1)}{\epsilon}}\diff s_2 \diff s_1.
\]
For all $\epsilon\in(0,1)$ and $t\in[0,T]$, set
\begin{equation}\label{eq:I}
I_\alpha^\epsilon(t)=\int_{0}^{t}\int_{s_1}^{t}(t-s_1)^{\alpha-1}(t-s_2)^{\alpha-1}e^{-\frac{\gamma(s_2-s_1)}{\epsilon}}\diff s_2 \diff s_1.
\end{equation}
We claim that there exists $C_\alpha(T)\in(0,\infty)$ such that for all $\epsilon\in(0,1)$ one has
\begin{equation}\label{eq:claimintegral}
\underset{t\in[0,T]}\sup~I_\alpha^\epsilon(t)\le C_\alpha(T)\epsilon^{\alpha}.
\end{equation}
The claim~\eqref{eq:claimintegral} is obtained as follows. Performing a change of variable
\[
\left\lbrace
\begin{aligned}
&\theta=t-s_2\\
&z=s_2-s_1,
\end{aligned}
\right.
\]
one has
\begin{align*}
I_\alpha^\epsilon(t)&=\iint_{\R^2}(t-s_1)^{\alpha-1}(t-s_2)^{\alpha-1}e^{-\frac{\gamma(s_2-s_1)}{\epsilon}}\mathds{1}_{t\ge s_2\ge s_1\ge 0}\diff s_2 \diff s_1\\
&=\iint_{\R^2}(\theta+z)^{\alpha-1}\theta^{\alpha-1}e^{-\frac{\gamma z}{\epsilon}}\mathds{1}_{0\le \theta \le t}\mathds{1}_{z\ge 0}\mathds{1}_{\theta+z\le t}\diff\theta \diff z\\
&\le \int_{0}^{t}\int_{0}^{t}z^{\alpha-1}\theta^{\alpha-1}e^{-\frac{\gamma z}{\epsilon}}\diff\theta \diff z\\
&\le \frac{T^\alpha}{\alpha}\int_{0}^{\infty}z^{\alpha-1}e^{-\frac{\gamma z}{\epsilon}}\diff z.
\end{align*}
Finally, using the change of variable $z'=\gamma z/\epsilon$, one has
\[
\int_{0}^{\infty}z^{\alpha-1}e^{-\frac{\gamma z}{\epsilon}}\diff z=\frac{\epsilon^{\alpha}}{\gamma^{\alpha}}\int_{0}^{\infty}(z')^{\alpha-1}e^{-z'}\diff z'=\frac{\epsilon^{\alpha}\Gamma(\alpha)}{\gamma^{\alpha}},
\]
and this concludes the proof of the claim.

Combining the inequality~\eqref{eq:boundproof2} with~\eqref{eq:claimintegral} and applying the Gr\"onwall inequality, one finally obtains the upper bound
\[
\underset{t\in[0,T]}\sup~\E[\|X^\epsilon(t)-\overline{X}(t)\|^2]\le C_\alpha(T)\bigl(1+\|x_0\|^2+\|y_0\|^2\bigr)\epsilon^{\alpha}.
\]
This yields the inequality~\eqref{eq:theo2} and the proof of Theorem~\ref{theo:2} is completed.
\end{proof}

\begin{rem}
In the proof of Theorem~\ref{theo:2}, it is not required that the fast process $\bigl(Y^\epsilon(t)\bigr)_{t\ge 0}$ is solution to a stochastic differential equation. It would be sufficient to assume appropriate ergodicity properties and that convergence to the invariant distribution is exponentially fast.
\end{rem}


\begin{thebibliography}{1}

\bibitem{CEB2022}
C.-E. Br\'ehier.
\newblock Asymptotic preserving schemes for {SDE}s driven by fractional
  {B}rownian motion in the averaging regime.
\newblock {\em J. Math. Anal. Appl.}, 509(1):Paper No. 125940, 20, 2022.

\bibitem{Freidlin1984}
M.~I. Freidlin and A.~D. Wentzell.
\newblock {\em Random perturbations of dynamical systems}, volume 260 of {\em
  Grundlehren der mathematischen Wissenschaften [Fundamental Principles of
  Mathematical Sciences]}.
\newblock Springer, Heidelberg, third edition, 2012.
\newblock Translated from the 1979 Russian original by Joseph Sz\"ucs.

\bibitem{Hairer2020}
M.~Hairer and X.-M. Li.
\newblock Averaging dynamics driven by fractional {B}rownian motion.
\newblock {\em Ann. Probab.}, 48(4):1826--1860, 2020.

\bibitem{Khasminskii1968}
R.~Z. Hasminskii.
\newblock On the principle of averaging the {I}t\^o's stochastic differential
  equations.
\newblock {\em Kybernetika (Prague)}, 4:260--279, 1968.

\bibitem{Kilbas2006}
A.~A. Kilbas, H.~M. Srivastava, and J.~J. Trujillo.
\newblock {\em Theory and applications of fractional differential equations},
  volume 204 of {\em North-Holland Mathematics Studies}.
\newblock Elsevier Science B.V., Amsterdam, 2006.

\bibitem{Liu2020}
W.~Liu, M.~R\"ockner, X.~Sun, and Y.~Xie.
\newblock Averaging principle for slow-fast stochastic differential equations
  with time dependent locally {L}ipschitz coefficients.
\newblock {\em J. Differential Equations}, 268(6):2910--2948, 2020.

\bibitem{Pavliotis2008}
G.~A. Pavliotis and A.~M. Stuart.
\newblock {\em Multiscale methods}, volume~53 of {\em Texts in Applied
  Mathematics}.
\newblock Springer, New York, 2008.
\newblock Averaging and homogenization.

\bibitem{Samko1993}
S.~G. Samko, A.~A. Kilbas, and O.~I. Marichev.
\newblock {\em Fractional integrals and derivatives}.
\newblock Gordon and Breach Science Publishers, Yverdon, 1993.
\newblock Theory and applications, Edited and with a foreword by S. M.
  Nikolski\u i, Translated from the 1987 Russian original, Revised by the
  authors.

\bibitem{Veretennikov1991}
A.~Y. Veretennikov.
\newblock On an averaging principle for systems of stochastic differential
  equations.
\newblock {\em Mat. Sb.}, 181(2):256--268, 1990.

\end{thebibliography}
\end{document}